\newtheorem{theorem}{Theorem}[section]
\newtheorem{lemma}[theorem]{Lemma}
\newtheorem{proposition}[theorem]{Proposition}
\theoremstyle{definition}
\newtheorem{example}[theorem]{Example}
\theoremstyle{remark}
\newtheorem{remark}[theorem]{Remark}
\numberwithin{equation}{section}
\begin{document}

\title[Left-orderability]{Left-orderable fundamental group and Dehn surgery on twist knots}

\author{Ryoto Hakamata}
\address{Graduate School of Education, Hiroshima University,
1-1-1 Kagamiyama, Higashi-hiroshima, Japan 739-8524.}

\author{Masakazu Teragaito}
\address{Department of Mathematics and Mathematics Education, Hiroshima University,
1-1-1 Kagamiyama, Higashi-hiroshima, Japan 739-8524.}
%    Current address
%\curraddr{Department of Mathematics and Statistics,
%Case Western Reserve University, Cleveland, Ohio 43403}
\email{teragai@hiroshima-u.ac.jp}
%    \thanks will become a 1st page footnote.
\thanks{The second author is partially supported by Japan Society for the Promotion of Science,
Grant-in-Aid for Scientific Research (C), 22540088.
}%

%    General info
\subjclass[2010]{Primary 57M25; Secondary 06F15}

%\date{}

%\dedicatory{This paper is dedicated to our authors.}

\keywords{left-ordering, Dehn surgery, twist knot}

\begin{abstract}
For any hyperbolic twist knot in the $3$-sphere,
we show that the resulting manifold by $r$-surgery on the knot has left-orderable
fundamental group if the slope $r$ satisfies the inequality $0\le r \le 4$.
\end{abstract}

\maketitle

%%%%%%%%%%%%%%%%%%%%%%%%%%%%%%%%%%%%
\section{Introduction}

A non-trivial group $G$ is said to be \textit{left-orderable\/} if it admits
a strict total ordering which is invariant under left-multiplication.
Thus, if $g<h$ then $fg<fh$ for any $f,g,h\in G$.
Many groups, which arise in topology such as 
orientable surface groups, knot groups, braid groups,
are known to be left-orderable.
In $3$-manifold topology,
it is natural to ask which $3$-manifolds
have left-orderable fundamental groups.
Toward this direction, there is very recent evidence of
connections between Heegaard-Floer homology and left-orderability of
fundamental groups.
More precisely, Boyer, Gordon and Watson \cite{BGW} conjecture
that an irreducible rational homology $3$-sphere is an $L$-space if and
only if its fundamental group is not left-orderable.
An $L$-space is a rational homology $3$-sphere $Y$ whose Heegaard--Floer homology group
$\widehat{HF}(Y)$ has rank equal to $|H_1(Y;\mathbb{Z})|$ (\cite{OS}).
They confirmed the conjecture for several classes of $3$-manifolds including Seifert fibered manifolds, 
Sol-manifolds.
Also, they showed that
if $-4<r<4$ then $r$-surgery on the figure-eight knot
yields a $3$-manifold whose fundamental group is left-orderable.
Later, Clay, Lidman and Watson \cite{CLW}
added the same conclusion for $r=\pm 4$.
Since the figure-eight knot cannot yield $L$-spaces by non-trivial Dehn surgery (\cite{OS}),
these give supporting evidences of the conjecture.

The purpose of this paper is to push forward them to all hyperbolic twist knots.
Any non-trivial twist knot except the trefoil is hyperbolic, and
does not admit non-trivial Dehn surgery yielding $L$-spaces (\cite{OS}).
Hence the following result gives a further supporting evidence
of the conjecture mentioned above.

\begin{theorem}\label{thm:main}
Let $K$ be a hyperbolic twist knot in the $3$-sphere $S^3$ as illustrated in Figure \ref{fig:twistknot}. 
If $0\le r\le 4$, then $r$-surgery on $K$ yields a manifold whose fundamental group is left-orderable.
\end{theorem}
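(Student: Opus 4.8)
The plan is to reduce the theorem to a statement about representations of the knot group and then to construct the required representations by hand. Write $G = \pi_1(S^3 \setminus K)$ for the twist knot group, with meridian $\mu$ and Seifert longitude $\lambda$, and write $r = p/q$ in lowest terms, so that $\pi_1(S^3_K(r)) = G/\langle\langle \mu^p\lambda^q\rangle\rangle$. Since a twist knot is a two-bridge knot, $G$ has a two-generator one-relator presentation $\langle a,b \mid aw = wb\rangle$, where $a,b$ are meridians and $w = w(a,b)$ is a word whose length grows with the number of twists; this explicit presentation is what makes the hands-on computation feasible.

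First I would dispose of the boundary slope $r=0$ separately: $S^3_K(0)$ has $b_1 = 1$ and, being $0$-surgery on a nontrivial knot, is irreducible by Gabai's theorem, so its fundamental group is left-orderable by the theorem of Boyer--Rolfsen--Wiest on irreducible $3$-manifolds with positive first Betti number. The substantive range is $0 < r \le 4$, where $S^3_K(r)$ is an irreducible rational homology sphere. For such manifolds I would invoke the criterion of Boyer--Gordon--Watson: to prove $\pi_1(S^3_K(r))$ left-orderable it suffices to produce a nontrivial homomorphism into $\widetilde{\mathrm{PSL}_2(\mathbb{R})}$, the universal cover of $\mathrm{PSL}_2(\mathbb{R})$, which is itself left-orderable through its faithful action on $\mathbb{R}$. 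Equivalently, it suffices to find a nontrivial representation $\rho: G \to \widetilde{\mathrm{PSL}_2(\mathbb{R})}$ with $\rho(\mu^p\lambda^q) = 1$, exactly as in the figure-eight case treated by Boyer--Gordon--Watson and Clay--Lidman--Watson.

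The heart of the argument is therefore the construction of such representations, one for each rational slope in $(0,4]$. I would parametrize the real representations of $G$ into $\mathrm{PSL}_2(\mathbb{R})$ by sending the meridians $a,b$ to elliptic elements (rotations of $\mathbb{H}^2$) of a common angle, governed by one eigenvalue parameter and one off-diagonal parameter subject to the vanishing of the associated Riley-type polynomial; this cuts out a real curve in the character variety, which I would analyze through the Chebyshev-polynomial structure of the relator so as to keep the argument uniform in the twist parameter. Along this curve $\rho(\mu)$ and $\rho(\lambda)$ commute, and since the centralizer of an elliptic element is the rotation group about its fixed point, $\rho(\lambda)$ is a rotation about the same center; the peripheral image thus lies in a single lifted rotation group $\cong \mathbb{R}$, on which the translation number is a genuine homomorphism, and $\rho(\mu^p\lambda^q)=1$ becomes the single linear equation $p\,\widetilde{\mathrm{rot}}(\rho(\mu)) + q\,\widetilde{\mathrm{rot}}(\rho(\lambda)) = 0$. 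I would compute $\widetilde{\mathrm{rot}}(\rho(\mu))$ and $\widetilde{\mathrm{rot}}(\rho(\lambda))$ as continuous functions along the curve --- reading the longitude's rotation from the trace of $\rho(\lambda)$, an explicit polynomial in the parameters --- and show by the intermediate value theorem that the ratio $-\widetilde{\mathrm{rot}}(\rho(\lambda))/\widetilde{\mathrm{rot}}(\rho(\mu))$ takes every value in $[0,4]$. For each rational $r=p/q$ in that interval this yields a representation killing exactly the slope $r$.

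The main obstacle is precisely this uniform-in-$n$ control of the rotation numbers. Because the relator $w$, and hence the Riley polynomial and the longitude trace, have degree growing with the number of twists, no finite computation settles all twist knots at once; I expect the crux to be establishing, via Chebyshev identities together with a careful continuity and monotonicity analysis of the trace of $\rho(\lambda)$ along the real representation curve, that the admissible rotation-number ratios form an interval containing $[0,4]$ for every hyperbolic twist knot. The two ends of the interval are the delicate points: $r=0$ is handled by the separate Betti-number argument above, while $r=4$ corresponds to a boundary of the real curve where $\rho(\mu)$ degenerates to a parabolic element or the rotation numbers approach their extreme values, so that closing the interval at $r=4$ will require a limiting argument in the spirit of the one Clay--Lidman--Watson used for $r=\pm 4$ on the figure-eight knot.
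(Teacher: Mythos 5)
Your overall skeleton coincides with the paper's: dispose of $r=0$ via Gabai irreducibility plus the positive-$b_1$ case of Boyer--Rolfsen--Wiest, produce for each $r\in(0,4)$ a representation of $G$ into the universal cover of $SL_2(\mathbb{R})$ killing $x^p\mathcal{L}^q$, realize all such slopes by an intermediate-value argument along the real Riley curve using Chebyshev-type identities to handle all twist parameters $n$ uniformly, and treat $r=4$ separately (the paper simply quotes \cite{CLW} and \cite{T} for the toroidal filling rather than running a limiting argument). Two points of divergence deserve comment. First, you propose sending the meridians to \emph{elliptic} elements and phrasing the filling condition via lifted rotation numbers; the paper instead takes $\rho_s(x)$ diagonal with real eigenvalue $\sqrt{t}$, $t>1$ (a hyperbolic element), so that the whole peripheral subgroup lands in the one-parameter subgroup $(-1,1)\times\{0\}$ of $\widetilde{SL_2(\mathbb{R})}$, where the lift is pinned down by Wood's commutator inequality and the slope condition becomes the elementary equation $p\log A_s+q\log B_s=0$. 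This is not a cosmetic difference: which interval of slopes is realized depends on which real slice of the character variety you walk along, and it is the hyperbolic-meridian family (Riley parameters $s>0$, $t>1$) that is shown to sweep out $(0,4)$; your elliptic family is a different slice and you give no reason it realizes the same interval. Second, and more importantly, what you label as ``the crux'' is exactly the content of the paper's Propositions \ref{prop:root} and \ref{prop:g-image}: one must prove that Riley's equation $\phi_n(s,T)=0$ has a solution with $s+2<T<s+2+4/s$ for \emph{every} $s>0$ and every $n$ (done via the identities $\mathcal{T}_m=\mathcal{T}_{m+1}$ at the roots of unity $e^{\pi i/(2n+1)}$ and $e^{3\pi i/(2n+1)}$), and then compute the limits of the slope function $g(s)$ as $s\to 0^+$ and $s\to\infty$ to see its image contains $(0,4)$. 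Your proposal correctly identifies where the difficulty lies but does not supply these arguments, so as written it is a plan rather than a proof; the monotonicity you hope for is not needed (and is not established in the paper either --- continuity and the two limits suffice).
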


\begin{figure}[ht]
\includegraphics*[scale=0.6]{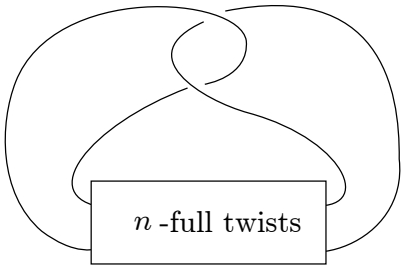}
\caption{The $n$-twist knot}\label{fig:twistknot}
\end{figure}

As seen in Figure \ref{fig:twistknot},
the clasp is left-handed. The range of the slope in the conclusion of Theorem \ref{thm:main} depends on this convention.
If the clasp is right-handed, the range would be $[-4,0]$.

For the right-handed trefoil,
if $r\ge 1$, then $r$-surgery yields an $L$-space (\cite{He}), and its fundamental group is not left-orderable by \cite{BGW}.
Otherwise, $r$-surgery yields a manifold with left-orderable fundamental group
(\cite{CW}).

In \cite{HT0}, we showed the same conclusion as Theorem \ref{thm:main} for the knot $5_2$, which is the $(-2)$-twist knot.
We will greatly generalize the argument in \cite{HT0} to handle all hyperbolic twist knots.
Our argument works even for the figure-eight knot, and
it is much simpler than that in \cite{BGW}, which involves character varieties.
We remark that the fact that the figure-eight knot is amphicheiral
makes possible to widen the range of slope to $-4\le r\le 4$.

%%%%%
\section{Knot group and representations}\label{sec:knotgroup}

Let $K_n$ be the $n$-twist knot with diagram illustrated in Figure \ref{fig:twistknot}.
Our convention is that the twists are right-handed if $n>0$ and left-handed if $n<0$.
Thus $K_1$ is the figure-eight knot and $K_{-1}$ is the right-handed trefoil.
If $n\ne 0, -1$, then $K_n$ is hyperbolic, and
if $|n|>1$, then $K_n$ is not fibered.
Throughout the paper, we assume that $n\ne 0,-1$.
Thus non-trivial Dehn surgery on $K_n$ never yields an $L$-space (\cite{OS}).

It is easy to see from the diagram that $K_n$ bounds a once-punctured Klein bottle
whose boundary slope is $4$.
(For example, consider a checkerboard coloring of the diagram.
Then the bounded surface gives such a once-punctured Klein bottle.)
Thus, $4$-surgery on $K_n$ yields a non-hyperbolic manifold, which is a toroidal manifold.
In \cite{T}, we showed that the resulting toroidal manifold by $4$-surgery on $K_n$
admits a left-ordering on its fundamental group.
Also, $1$, $2$ and $3$-surgeries on $K_n$ are known to yield small Seifert fibered manifolds (\cite{BW}), and the resulting manifolds have left-orderable fundamental groups by \cite{BGW}.
However, we do not need the latter fact.

Let $G=\pi_1(S^3-K_n)$ be the knot group of $K_n$.

\begin{lemma}\label{lem:knotgroup}
The knot group 
$G$ admits a presentation
\[
G=\langle x,y \mid w^nx=yw^n \rangle,
\]
where
$x$ and $y$ are meridians and $w=xy^{-1}x^{-1}y$.

Furthermore, the longitude $\mathcal{L}$ is given as $\mathcal{L}=w_*^nw^n$, where
$w_*=yx^{-1}y^{-1}x$ is obtained from $w$ by reversing the order of letters.
\end{lemma}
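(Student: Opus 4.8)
The plan is to compute a Wirtinger presentation directly from the diagram in Figure~\ref{fig:twistknot} and then simplify it by Tietze transformations. First I would label the arcs of the diagram, assign a meridional generator to each arc, and write down the relation contributed by each crossing. Since $K_n$ is a two-bridge knot, the two over-arcs at the top of the diagram give two meridians, which I name $x$ and $y$, and every remaining generator can be expressed in terms of these two by repeatedly applying the crossing relations. The aim of this elimination is to collapse the presentation down to a single relation between $x$ and $y$.

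The heart of the argument is the analysis of the twist region. Each full twist contributes a pair of crossings, and reading the Wirtinger relations across one full twist shows that the two strands entering the region are carried to the two strands leaving it by conjugation by the word $w=xy^{-1}x^{-1}y$. Iterating this $n$ times replaces $w$ by $w^n$, which is where the exponent $n$ in the relator originates; the clasp at the bottom then supplies the boundary relation, and after collecting terms the single defining relation takes the form $w^n x = y w^n$. I expect this bookkeeping --- tracking how each meridian is conjugated as one passes through successive twists, and correctly handling the opposite orientation conventions for $n>0$ and $n<0$ --- to be the main obstacle, since a sign error at a single crossing propagates through the entire word. An induction on $|n|$, establishing the conjugation rule for one twist as the base case and composing it with itself, should keep this under control.

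For the longitude, the plan is to trace a parallel copy of $K_n$ along the diagram and record, in the order encountered, the meridians of the arcs it passes underneath; this product word is the longitude $\mathcal{L}$ up to a power of the meridian needed to make it null-homologous. One passage through the twist region contributes $w^n$, while the other passage runs through it in the reversed order and contributes $w_*^n$, where $w_* = yx^{-1}y^{-1}x$ is $w$ read backwards; assembled in the order in which the arcs are met, these give $\mathcal{L} = w_*^n w^n$. Finally I would check that no meridional correction is required: since $w$ has exponent sum zero in the abelianization, so does $w_*^n w^n$, so this curve is already null-homologous and is therefore the preferred longitude.
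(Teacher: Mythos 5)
Your plan is a legitimate route to the presentation, but it is genuinely different from the one the paper takes, and the comparison is instructive. The paper does not touch the Wirtinger presentation of the knot diagram at all: it works from a surgery description of $K_n$ (Figure~\ref{fig:pi1}) in which the twist region is encoded by a $-1/n$-surgery on an unknotted circle encircling two strands and the clasp by a $1$-surgery on a second circle. The group of $K_n$ is then the link group of a very simple three-component link modulo the two filling relations $\lambda_2\mu_2=1$ and $\lambda_3^n\mu_3^{-1}=1$; the exponent $n$ enters in a single line through $\mu_3^{-1}=\lambda_3^{-n}=(x^{-1}yxy^{-1})^n$, with no induction on $|n|$ and with positive and negative $n$ treated uniformly. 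The longitude likewise falls out algebraically as $\mu_3\mu_2\mu_3^{-1}\mu_2^{-1}$, which simplifies to $w_*^nw^n$; the null-homology is automatic because this word is visibly a commutator-type expression, rather than something you must verify by an exponent-sum count after tracing a push-off. Your Wirtinger approach is more elementary and self-contained, but it carries exactly the costs you anticipate: an induction on $|n|$ to establish the conjugation rule through the twist region, separate sign conventions for $n>0$ and $n<0$, and a framing correction check for the longitude. Your exponent-sum argument for the preferred longitude is correct as stated (both $w$ and $w_*$ have exponent sum zero), but note that it only shows no meridional correction is needed \emph{if} the traced word really assembles to $w_*^nw^n$; that assembly is the part of your sketch that still requires the careful crossing-by-crossing bookkeeping, whereas the paper's surgery calculus sidesteps it entirely.
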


This is slightly different from that in \cite[Proposition 1]{HS}, but
they are isomorphic.

\begin{proof}
We use the surgery diagram of $K_n$ as illustrated in Figure \ref{fig:pi1}, where
$1$-surgery and $-1/n$-surgery are performed along the second and third components,
as indicated by numbers, respectively.

\begin{figure}[ht]
\includegraphics*[scale=0.7]{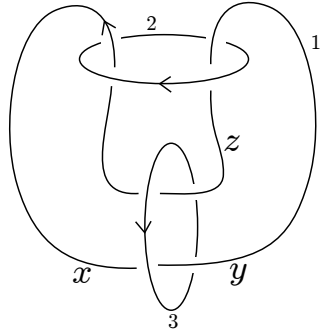}
\caption{A surgery diagram of $K_n$}\label{fig:pi1}
\end{figure}

Let $\mu_i$ and $\lambda_i$ be the meridian and longitude of the $i$-th component.
Then $y=\mu_3^{-1}x\mu_3$, $z=\mu_2^{-1}y\mu_2$, $\lambda_2=x^{-1}y$ and $\lambda_3=yz^{-1}$.
By $1$-surgery on the second component, 
the relation $\lambda_2\mu_2=1$ arises.
Similarly, $-1/n$-surgery on the third component implies $\lambda_3^n\mu_3^{-1}=1$.

Hence, $\mu_3^{-1}=\lambda_3^{-n}=(zy^{-1})^n=(x^{-1}yxy^{-1})^n$.
Let $w=xy^{-1}x^{-1}y$.
Then $x^{-1}yxy^{-1}=x^{-1}ywy^{-1}x$, so $\mu_3^{-1}=x^{-1}yw^ny^{-1}x$.
By substituting this to the remaining relation $y=\mu_3^{-1}x\mu_3$,
we obtain $w^nx=yw^n$.

Finally, the longitude $\mathcal{L}$ is given as $\mu_3\mu_2\mu_3^{-1}\mu_2^{-1}=x^{-1}yw^{-n}y^{-1}xw^n$.
Since $x^{-1}yw^{-n}y^{-1}x=(yx^{-1}y^{-1}x)^n$,
we have $\mathcal{L}=w_*^nw^n$, where $w_*=yx^{-1}y^{-1}x$.
\end{proof}

Let $s>0$ and $t>1$ be real numbers.
Let $\rho_s: G\to SL_2(\mathbb{R})$ be the representation defined by the correspondence
\begin{equation}\label{eq:rho}
\rho_s(x)=
\begin{pmatrix}
\sqrt{t} & 0 \\
0 & \frac{1}{\sqrt{t}}
\end{pmatrix},\quad
\rho_s(y)=
\begin{pmatrix}
\frac{t-s-1}{\sqrt{t}-\frac{1}{\sqrt{t}}} & \frac{s}{(\sqrt{t}-\frac{1}{\sqrt{t}})^2}-1 \\
-s & \frac{s+1-\frac{1}{t}}{\sqrt{t}-\frac{1}{\sqrt{t}}}
\end{pmatrix}.
\end{equation}

For $P=\begin{pmatrix}
t-1 & 1\\
0 & \sqrt{t}-\frac{1}{\sqrt{t}}
\end{pmatrix}$,
\[
P^{-1} \rho_s(x) P=\begin{pmatrix}
\sqrt{t} & \frac{1}{\sqrt{t}} \\
0 & \frac{1}{\sqrt{t}}
\end{pmatrix},\quad 
P^{-1} \rho_s(y) P=\begin{pmatrix}
\sqrt{t} & 0\\
-s\sqrt{t} & \frac{1}{\sqrt{t}}
\end{pmatrix}.
\]
Hence, (\ref{eq:rho}) gives a (non-abelian) representation if
$s$ and $t$ satisfy Riley's equation $z_{1,1}+(1-t)z_{1,2}=0$,
where $z_{i,j}$ is the $(i,j)$-entry of the matrix $P^{-1}\rho_s(w^n)P$ (\cite{R}).
See also \cite{DHY}.
Then $\phi_n(s,t)=z_{1,1}+(1-t)z_{1,2}$ is called the Riley polynomial of $K_n$.

Since $s$ and $t$ are limited to be positive real numbers in our setting, it is not obvious that
there exist solutions for Riley's equation $\phi_n(s,t)=0$.
However, this will be verified in Proposition \ref{prop:root}.
We temporarily assume that $s$ and $t$ are chosen
so that $\phi_n(s,t)=0$.

From (\ref{eq:rho}), we have 
\begin{equation}\label{eq:www}
W=\rho_s(w)=\begin{pmatrix}
1+s-st+\frac{s^2 t}{t-1} &  \frac{t-1+st}{\sqrt{t}}\frac{(\sqrt{t}-\frac{1}{\sqrt{t}})^2-s}{(\sqrt{t}-\frac{1}{\sqrt{t}})^2}\\
\frac{s(1+s-t)}{\sqrt{t}} & 1+s-\frac{s^2}{t-1}-\frac{s}{t} 
\end{pmatrix}.
\end{equation}
Let 
\begin{equation}\label{eq:eigenvalue}
\lambda_{\pm}=\frac{1}{2}\left\{s^2-\left(t+\frac{1}{t}-2\right)s+2\pm \sqrt{\left(s^2-\left(t+\frac{1}{t}-2\right)s+2\right)^2-4}\right\}\in \mathbb{C}.
\end{equation}
These are eigenvalues of $W$, and so $\lambda_++\lambda_-=\mathrm{tr}(W)=s^2-(t+1/t-2)s+2$
and $\lambda_+\lambda_-=1$.
In Proposition \ref{prop:root}, we will see that
$s+2<t+1/t<s+2+4/s$.
This implies 
\[
-2<s^2-\left(t+\frac{1}{t}-2\right)s+2<2,
\]
and so $\lambda_\pm=e^{\pm i\theta}$ for some $\theta\in (0,\pi)$.
In particular, we remark that $\lambda_+\ne \lambda_-$.

%%%%
\begin{proposition}
The Riley polynomial $\phi_n(s,t)$ of $K_n$ is given by
\begin{equation}\label{eq:riley}
\frac{\lambda_+^{n+1}-\lambda_-^{n+1}}{\lambda_+-\lambda_-}
-\left(t+\frac{1}{t}-1-s\right)\frac{\lambda_+^{n}-\lambda_-^{n}}{\lambda_+-\lambda_-}.
\end{equation}
\end{proposition}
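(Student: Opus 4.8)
The plan is to reduce everything to computing powers of a single $2\times 2$ matrix of determinant one. Writing $V=P^{-1}\rho_s(w)P=P^{-1}WP$ and using $\rho_s(w^n)=W^n$, the Riley polynomial is $\phi_n(s,t)=z_{1,1}+(1-t)z_{1,2}$, where the $z_{i,j}$ are the entries of $V^n=(P^{-1}WP)^n$. Since $\det V=\det W=1$ and the eigenvalues $\lambda_\pm$ of $W$ are distinct, I would first record the standard power formula
\[
V^n=S_n\,V-S_{n-1}I,\qquad S_n=\frac{\lambda_+^n-\lambda_-^n}{\lambda_+-\lambda_-},
\]
which follows by induction from the Cayley--Hamilton relation $V^2=\mathrm{tr}(W)\,V-I$ together with the recurrence $S_{n+1}=\mathrm{tr}(W)\,S_n-S_{n-1}$ (itself a consequence of $\lambda_++\lambda_-=\mathrm{tr}(W)$ and $\lambda_+\lambda_-=1$).

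Reading off the first row gives $z_{1,1}=S_n v_{1,1}-S_{n-1}$ and $z_{1,2}=S_n v_{1,2}$, where the $v_{i,j}$ are the entries of $V$. Hence
\[
\phi_n(s,t)=S_n\bigl(v_{1,1}+(1-t)v_{1,2}\bigr)-S_{n-1},
\]
so the whole statement comes down to evaluating the single linear combination $v_{1,1}+(1-t)v_{1,2}$. The next step is to compute $V$ directly from the conjugated meridians displayed just before $(\ref{eq:www})$, namely as $V=XY^{-1}X^{-1}Y$ with $X=P^{-1}\rho_s(x)P$ and $Y=P^{-1}\rho_s(y)P$. Because $X$ and $Y$ are triangular, their inverses and the product are easy to form, and I expect to obtain $v_{1,1}=1+2s+s^2-st$ and $v_{1,2}=1-(1+s)/t$.

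The final step is an algebraic identity. Substituting these entries should give
\[
v_{1,1}+(1-t)v_{1,2}=s^2+3s+3-s\Bigl(t+\tfrac1t\Bigr)-\Bigl(t+\tfrac1t\Bigr)=\mathrm{tr}(W)-\Bigl(t+\tfrac1t-1-s\Bigr),
\]
using $\mathrm{tr}(W)=s^2-(t+1/t-2)s+2=\lambda_++\lambda_-$. Feeding this back, $\phi_n=S_n(\lambda_++\lambda_-)-S_n\bigl(t+1/t-1-s\bigr)-S_{n-1}$, and the recurrence collapses $S_n(\lambda_++\lambda_-)-S_{n-1}$ to $S_{n+1}$, yielding exactly $(\ref{eq:riley})$.

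The key idea is the power formula, after which only the last identity requires care; that bookkeeping is the one genuinely delicate point, and everything else is formal. I would also note that the expression is valid wherever $\lambda_+\neq\lambda_-$, which holds in the regime of interest since $\lambda_\pm=e^{\pm i\theta}$ with $\theta\in(0,\pi)$, the degenerate case being covered by the removable singularity of $S_n$ viewed as a polynomial in $\mathrm{tr}(W)$.
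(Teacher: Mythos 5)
Your argument is correct, but it takes a genuinely different route from the paper: the paper's proof of this proposition is a one-line citation to Morifuji's explicit computation of the Riley polynomial (Proposition 3.1 of \cite{M}), together with the observation that $K_n$ is the mirror image of $J(2,-n)$ in that paper's conventions. You instead give a self-contained derivation. I checked the computations: with $X=P^{-1}\rho_s(x)P$ and $Y=P^{-1}\rho_s(y)P$ one indeed gets $XY^{-1}=\begin{pmatrix}1+s&1\\ s&1\end{pmatrix}$ and $X^{-1}Y=\begin{pmatrix}1+s&-1/t\\ -st&1\end{pmatrix}$, whence $v_{1,1}=(1+s)^2-st$ and $v_{1,2}=1-(1+s)/t$ as you predicted, and the linear combination $v_{1,1}+(1-t)v_{1,2}$ does collapse to $\mathrm{tr}(W)-\bigl(t+\tfrac1t-1-s\bigr)$; the power formula $V^n=S_nV-S_{n-1}I$ is valid for all integers $n$ (including the negative $n$ the theorem needs) because $\det V=1$, so the recurrence step at the end is legitimate. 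Your approach buys self-containedness and avoids having to reconcile sign and mirror-image conventions with an external reference, at the cost of some matrix bookkeeping; the paper's approach is shorter but opaque. Note that the paper itself gestures at your route in the remark following the proposition, where it says the identity can be checked directly using Lemma \ref{lem:wij} --- that lemma is precisely your power formula $W^n=\tau_nW-\tau_{n-1}I$ written out entrywise, so your proof essentially supplies the verification the authors left to the reader. The only cosmetic weakness is the phrase ``I expect to obtain'' for the entries of $V$; in a final write-up you should display that multiplication explicitly, since the whole proof hinges on those two entries.
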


\begin{proof}
The Riley polynomial is explicitly calculated in \cite[Proposition 3.1]{M}.
Our knot $K_n$ corresponds to the mirror image of $J(2,-n)$ in \cite{M}.
This gives the conclusion.
\end{proof}

By using Lemma \ref{lem:wij},
it is not hard to check directly that 
$\rho_s(w^nx)=\rho_s(yw^n)$ holds if and only if 
$s$ and $t$ make the polynomial (\ref{eq:riley}) equal to zero.

Set $T=t+1/t$ and $\tau_m=(\lambda_+^{m}-\lambda_-^{m})/(\lambda_+-\lambda_-)$ for convenience.
Then the Riley polynomial of $K_n$ is expressed simply as
$\phi_n(s,T)=\tau_{n+1}-(T-1-s)\tau_n$.
Since $\tau_m$ is symmetric in $\lambda_+$ and $\lambda_-$,
it can be expressed as a polynomial of $\lambda_++\lambda_-$, which is
$s^2-(T-2)s+2$.
Also, it is easy to see that a recursive relation
\begin{equation}\label{eq:rec}
\tau_{m+1}-(\lambda_++\lambda_-)\tau_m+\tau_{m-1}=0
\end{equation}
and $\tau_{-m}=-\tau_m$ hold for any integer $m$.

\begin{example}\label{ex:riley}
Clearly, $\tau_0=0$ and $\tau_1=1$.
Thus we have $\tau_2=s^2-(T-2)s+2$ and $\tau_3=(s^2-(T-2)s+2)^2-1$.
From these, the figure-eight knot has the Riley polynomial
\[
\phi_1(s,T)=\tau_2-(T-1-s)\tau_1=-(s+1)T+s^2+3s+3.
\]
Similarly, the $2$-twist knot, $6_1$ in the knot table, has the Riley polynomial
\begin{eqnarray*}
\phi_2(s,T)&=& \tau_3-(T-1-s)\tau_2\\
&=& (s^2+s)T^2-(2 s^3+6 s^2+7 s+2)T+s^4+5 s^3+11 s^2+12 s+5.
\end{eqnarray*}
\end{example}

From the recursive relation (\ref{eq:rec}),
we see that the Riley polynomial $\phi_n(s,T)$ has degree $|n|$ in $T$.
Thus we cannot solve the equation $\phi_n(s,T)=0$ for $T$, in general.

%\begin{lemma}
%The Riley polynomial $\phi_n(s,T)$ has degree $|n|$ in $T$.
%Moreover, the coefficient of the highest term in $T$ is positive
%or negative, according as $n$ is even \textup{(}resp. odd\textup{)} or odd \textup{(}resp. %even\textup{)} when $n>0$ \textup{(}resp. $n<0$\textup{)}.
%\end{lemma}

%\begin{proof}
%Let $m>0$.
%From the recursive relation (\ref{eq:rec}),
%we see that $\tau_m$ has degree $m-1$ in $T$.
%More precisely, the highest term is $(-s)^{m-1}T^{m-1}$.
%Thus, if $n>0$, then the Riley polynomial $\phi_n(s,T)=\tau_{n+1}-(T-1-s)\tau_n$ has
%degree $n$ in $T$, and its highest term is $((-s)^{n}-(-s)^{n-1})T^n=(-1)^n(s^{n}+s^{n-1})T^n$.
%This gives the conclusion.

%If $n<0$, then set $l=|n|$.  
%The Riley polynomial is $\phi_n(s,T)=\phi_{-l}(s,T)=\tau_{-l+1}-(T-1-s)\tau_{-l}$.
%Since $\tau_{-m}=-\tau_m$,
%this is $-\tau_{l-1}+(T-1-s)\tau_l$.
%Thus it has degree $l=|n|$ in $T$, and the highest term is $(-s)^{l-1}T^{l}=(-1)^{l-1}s^{l-1}T^{l}$.
%This gives the conclusion again.
%\end{proof}

%%%%%%%%%%%%%%%%%%%%%%%%%%%%%%%%%%%%%%%%%%%%%%%%%%%%%%%%%%%%
%%%%%%%
\section{Riley polynomials}

In this section, we show that
Riley's equation $\phi_n(s,T)=0$ has a pair of solutions $(s,T)$
such as $s+2<T<s+2+4/s$ for any $s>0$.
In fact, we can choose $T$ satisfying
$s+2+c/s<T<s+2+4/s$ where $c$ is a constant depending only $n$, unless $n=1$.

Let $m$ be a positive integer.
For $z=e^{i\theta}$ $(0\le\theta\le\pi)$,
set $\mathcal{T}_m(z)=z^{m-1}+z^{m-3}+\dots+z^{3-m}+z^{1-m}$.
If $z\ne \pm 1$, then $\mathcal{T}_m(z)=(z^m-z^{-m})/(z-z^{-1})$.
Define $\mathcal{T}_0=0$ and $\mathcal{T}_{-m}(z)=-\mathcal{T}_m(z)$.
Since $\mathcal{T}_m(z)$ is symmetric for $z$ and $z^{-1}$, it can be expanded as a polynomial
of $z+z^{-1}$.
Furthermore, the recursive relation
\[
\mathcal{T}_{m+1}(z)-(z+z^{-1})\mathcal{T}_m(z)+\mathcal{T}_{m-1}(z)=0
\]
holds.
Also, $\mathcal{T}_m(1)=m$ 
and $\mathcal{T}_m(-1)=(-1)^{m-1}m$
for any integer $m$.

\begin{lemma}\label{lem:tau}
\begin{itemize}
\item[(1)] Let $m\ge 1$. Then,
$\mathcal{T}_{m}(e^{\frac{\pi}{2m+1}i})=\mathcal{T}_{m+1}(e^{\frac{\pi}{2m+1}i})$, and
this value is positive.
\item[(2)] Let $m\ge 2$. Then,
$\mathcal{T}_{m}(e^{\frac{3\pi}{2m+1}i})=\mathcal{T}_{m+1}(e^{\frac{3\pi}{2m+1}i})$, and
this value is negative.
\end{itemize}
\end{lemma}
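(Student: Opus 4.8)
The plan is to reduce both parts to the single elementary identity
\[
\mathcal{T}_m(e^{i\theta})=\frac{e^{im\theta}-e^{-im\theta}}{e^{i\theta}-e^{-i\theta}}=\frac{\sin m\theta}{\sin\theta}, \qquad \theta\in(0,\pi),
\]
which follows immediately from the closed form $\mathcal{T}_m(z)=(z^m-z^{-m})/(z-z^{-1})$ recorded above together with $e^{i\theta}-e^{-i\theta}=2i\sin\theta$. Once this is in hand, each assertion becomes a matter of locating a single angle and reading off a sign, so I expect no serious difficulty.

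For part (1) I would set $\theta=\pi/(2m+1)$. The decisive observation is the additive relation $m\theta+(m+1)\theta=(2m+1)\theta=\pi$, whence $(m+1)\theta=\pi-m\theta$ and therefore $\sin((m+1)\theta)=\sin(\pi-m\theta)=\sin(m\theta)$. Dividing by $\sin\theta>0$ gives the claimed equality $\mathcal{T}_m(e^{i\theta})=\mathcal{T}_{m+1}(e^{i\theta})$. For positivity I note that $m\theta=m\pi/(2m+1)$ lies in $(0,\pi/2)$, since $m/(2m+1)<1/2$, so $\sin(m\theta)>0$ and the common value is positive.

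Part (2) runs along exactly the same lines with $\theta=3\pi/(2m+1)$. Now $(2m+1)\theta=3\pi$, so $(m+1)\theta=3\pi-m\theta$ and $\sin((m+1)\theta)=\sin(3\pi-m\theta)=\sin(m\theta)$ because $3$ is odd; dividing by $\sin\theta>0$ again yields the equality, where I first check that $\theta<\pi$, valid since $m\ge 2$ forces $2m+1\ge 5$. The sign is fixed by locating $m\theta=3m\pi/(2m+1)$: the factor $3m/(2m+1)$ is increasing in $m$ and takes values in $[6/5,3/2)$ for $m\ge 2$, so $m\theta\in(\pi,3\pi/2)$ and hence $\sin(m\theta)<0$, making the common value negative.

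I do not anticipate a genuine obstacle. The only points requiring a little care are confirming that the relevant angles $\theta$ and $m\theta$ remain in the intended quadrants, so that the signs of $\sin\theta$ and $\sin m\theta$ come out correctly, and recording why the hypothesis $m\ge 2$ is needed in part (2): it is precisely what keeps $\theta=3\pi/(2m+1)$ below $\pi$ and $m\theta$ inside $(\pi,3\pi/2)$. Everything else is a one-line trigonometric manipulation.
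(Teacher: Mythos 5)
Your proof is correct and follows essentially the same route as the paper: the equality $\mathcal{T}_m=\mathcal{T}_{m+1}$ comes from $z^{2m+1}=-1$ (equivalently $(2m+1)\theta\in\{\pi,3\pi\}$), and the sign is read off from $\mathcal{T}_m(e^{i\theta})=\sin(m\theta)/\sin\theta$. You merely make explicit the quadrant checks that the paper leaves as ``a direct calculation,'' so nothing further is needed.
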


\begin{proof}
(1) Let $z=e^{\frac{\pi}{2m+1}i}$.
Then the fact that $z^{2m+1}=-1$ immediately implies
$\mathcal{T}_m(z)=\mathcal{T}_{m+1}(z)$.
A direct calculation shows
\[
\mathcal{T}_m(z)=\frac{\sin \frac{m\pi}{2m+1}}{\sin\frac{\pi}{2m+1}}>0.
\]

(2) Similarly, set $z=e^{\frac{3\pi}{2m+1}i}$.
Then $z^{2m+1}=-1$ holds again.
Hence we have $\mathcal{T}_m(z)=\mathcal{T}_{m+1}(z)$,
and
\[
\mathcal{T}_m(z)=\frac{\sin \frac{3m\pi}{2m+1}}{\sin\frac{3\pi}{2m+1}}<0.
\]
\end{proof}

Now, fix an $s>0$.
We introduce a function $\Phi_n: [s+2,s+2+4/s]\to\mathbb{R}$
by
\begin{equation}\label{eq:riley-P}
\Phi_n(T)=\mathcal{T}_{n+1}(z)-(T-1-s)\mathcal{T}_n(z),
\end{equation}
where
\[
z=\frac{1}{2}\left\{s^2-(T-2)s+2+i\sqrt{4-(s^2-(T-2)s+2)^2}\right\}.
\]
Since $s+2\le T\le s+2+4/s$, we have
$-2\le s^2-(T-2)s+2\le 2$.
Thus $z=e^{i\theta}$ for $\theta\in [0,\pi]$.
We will seek a solution $T$ for $\Phi_n(T)=0$ satisfying
$s+2<T<s+2+4/s$, because
it gives a pair of solutions $(s,T)$ for Riley's equation $\phi_n(s,T)=0$.

%%%%%%%%%%%%%%%%%%%%%
\begin{proposition}\label{prop:root}
Riley's equation $\phi_n(s,T)=0$ has a real solution $T$ satisfying
$s+2<T< s+2+4/s$ for any $s>0$.
Moreover, if $n\ne 1$, then $T$ can be chosen so that
$s+2+c/s<T<s+2+4/s$, where $c$ is a constant depending only on $n$.
In particular, $\phi_n(s,t)=0$ has a solution $t>1$ for any $s>0$. 
\end{proposition}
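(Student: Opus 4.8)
The plan is to reparametrize the problem by the angle $\theta$. Writing $z=e^{i\theta}$ as in the definition of $\Phi_n$, the relation $z+z^{-1}=2\cos\theta=s^2-(T-2)s+2$ lets me solve for $T$ as
\[
T(\theta)=s+2+\frac{2(1-\cos\theta)}{s},
\]
and since $dT/d\theta=(2\sin\theta)/s>0$, the map $\theta\mapsto T(\theta)$ is an increasing bijection from $[0,\pi]$ onto $[s+2,s+2+4/s]$. Thus it suffices to produce a root $\theta^{*}\in(0,\pi)$ of $g(\theta):=\Phi_n(T(\theta))$, and for the sharper statement to guarantee that $\theta^{*}$ is bounded below by a constant depending only on $n$; indeed a bound $\theta^{*}>\delta$ translates into $T^{*}>s+2+c/s$ with $c=2(1-\cos\delta)$, while $\theta^{*}<\pi$ already gives $T^{*}<s+2+4/s$.

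The engine is Lemma \ref{lem:tau}, whose point is that at the $s$-independent angles $\pi/(2m+1)$ and $3\pi/(2m+1)$ the consecutive values $\mathcal{T}_m$ and $\mathcal{T}_{m+1}$ coincide. For $n\ge 2$ I apply this with $m=n$: at such an angle $\mathcal{T}_{n+1}(z)=\mathcal{T}_n(z)=:V$, so
\[
g(\theta)=V-(T(\theta)-1-s)\,V=V\left(2+s-T(\theta)\right)=-V\,\frac{2(1-\cos\theta)}{s}.
\]
Since the last factor is negative on $(0,\pi)$, the sign of $g$ is opposite to that of $V$. By Lemma \ref{lem:tau}(1), $V>0$ at $\theta=\pi/(2n+1)$, so $g<0$ there; by Lemma \ref{lem:tau}(2), $V<0$ at $\theta=3\pi/(2n+1)$, so $g>0$ there. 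The intermediate value theorem then yields a root $\theta^{*}\in(\pi/(2n+1),3\pi/(2n+1))$. Because $3\pi/(2n+1)<\pi$ for $n\ge 2$ and both angles are independent of $s$, this root lies in the required range with $c=2(1-\cos(\pi/(2n+1)))$.

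For negative $n=-k$ I first use $\mathcal{T}_{-m}=-\mathcal{T}_m$ to rewrite $\Phi_{-k}(T)=(T-1-s)\mathcal{T}_k(z)-\mathcal{T}_{k-1}(z)$, and then run the same argument with the consecutive pair $(\mathcal{T}_{k-1},\mathcal{T}_k)$ at the angles $\pi/(2k-1)$ and $3\pi/(2k-1)$: where $\mathcal{T}_{k-1}=\mathcal{T}_k=:V$ one gets $g=V\,(T-2-s)=V\,\tfrac{2(1-\cos\theta)}{s}$, so now $g$ has the sign of $V$, positive at $\pi/(2k-1)$ and negative at $3\pi/(2k-1)$, giving a root in that interval for all $k\ge 3$. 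The two small exceptional knots are treated by hand: for $n=-2$ a direct evaluation gives $g(\pi/3)=1/s>0$ and $g(\pi/2)=-1<0$, so there is a root in $(\pi/3,\pi/2)$ and $c=1$ works; for the figure-eight knot $n=1$ the polynomial $\phi_1$ is linear in $T$ (Example \ref{ex:riley}), and solving gives $T=(s^2+3s+3)/(s+1)$, which satisfies $s+2<T<s+2+4/s$ but admits no $s$-independent lower gap, consistent with the exclusion of $n=1$ in the sharper claim. Finally the last sentence is immediate: any solution has $T>s+2>2$, and $t+1/t=T$ then has the unique solution $t=(T+\sqrt{T^{2}-4})/2>1$.

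I expect the main obstacle to be the bookkeeping that makes the lower bound on $\theta^{*}$ uniform in $s$ across the sign and parity of $n$. The crucial structural reason it works is exactly that Lemma \ref{lem:tau} is evaluated at angles independent of $s$: there the $\mathcal{T}$-values carry no $s$-dependence, so all of the $s$-dependence of $g$ is concentrated in the single factor $2+s-T(\theta)$ (respectively $T(\theta)-2-s$), whose sign is constant on $(0,\pi)$. This is what pins the sign changes at fixed angles and thereby produces a root interval, hence a constant $c$, that does not degenerate as $s\to 0$ or $s\to\infty$.
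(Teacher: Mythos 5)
Your proposal is correct and is essentially the paper's own proof: both rest on Lemma \ref{lem:tau} evaluated at the $s$-independent angles $\pi/(2m+1)$ and $3\pi/(2m+1)$, where $\mathcal{T}_m=\mathcal{T}_{m+1}$ forces $\Phi_n$ to reduce to a single signed multiple of that common value, followed by the intermediate value theorem, with the cases $n=1$ and $n=-2$ handled by the same explicit computations. The only difference is cosmetic: you parametrize by $\theta$ with $T=s+2+2(1-\cos\theta)/s$, whereas the paper substitutes $T=s+2+c/s$ with $c=2-2\cos\theta$ directly.
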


\begin{proof}
Suppose $n>1$.
By Lemma \ref{lem:tau},
\[
\mathcal{T}_{n+1}(e^{\frac{\pi}{2n+1}i})=\mathcal{T}_n(e^{\frac{\pi}{2n+1}i}),\quad
\mathcal{T}_{n+1}(e^{\frac{3\pi}{2n+1}i})=\mathcal{T}_n(e^{\frac{3\pi}{2n+1}i}).
\]
Let $c=2-2\cos\frac{\pi}{2n+1}$ and $c'=2-2\cos\frac{3\pi}{2n+1}$.
Then $c,c'\in (0,4)$ and $c<c'$.

Also,
\begin{eqnarray*}
\Phi_n(s+2+\frac{c}{s})&=&\mathcal{T}_{n+1}(e^{\frac{\pi}{2n+1}i})-\left(1+\frac{c}{s}\right)\mathcal{T}_n(e^{\frac{\pi}{2n+1}i})\\
&=& -\frac{c}{s}\cdot\mathcal{T}_n(e^{\frac{\pi}{2n+1}i}),\\
\Phi_n(s+2+\frac{c'}{s})&=&\mathcal{T}_{n+1}(e^{\frac{3\pi}{2n+1}i})-\left(1+\frac{c'}{s}\right)\mathcal{T}_n(e^{\frac{3\pi}{2n+1}i})\\
&=& -\frac{c'}{s}\cdot\mathcal{T}_n(e^{\frac{3\pi}{2n+1}i}).
\end{eqnarray*}
By Lemma \ref{lem:tau}, these values have distinct signs.
We remark that $\Phi_n(T)$ is a polynomial function of $T$, so it is continuous.
Thus
if $n>1$, we have a solution $T$ for $\Phi_n(T)=0$, satisfying $s+2+c/s<T<s+2+c'/s$,
from the Intermediate-Value Theorem.
Since $T>2$, $t+1/t=T$ has a real solution for $t$.
If we choose $t=(T+\sqrt{T^2-4})/2$, then $t>1$.

%%%%%%%%%%%%%%%%%%%%%%%%
When $n=1$,
the Riley polynomial is
$\phi_1(s,T)=-(s+1)T+s^2+3s+3$ as shown in Example \ref{ex:riley}.
Hence the equation $\phi_1(s,T)=0$ has the unique solution
$T=(s^2+3s+3)/(s+1)=s+2+1/(s+1)$ for a given $s$.
This satisfies $s+2<T<s+2+1/s$.

%%%%%%%%%%%%%%%%%%%%%%%%%%%%%%%%%%%%%%%%%%%%
%%%%%%%%%%%%%%%%%%%%%%%%%%%%%%%%%%%%%%%%%%%%
%%%%%%%%%%%%%%%%%%%%%%%%%%%%%%%%%%%%%%%%%%%%
Suppose $n<0$.  (Recall that we assume $n\ne -1$.)   Set $l=|n|\ge 2$.
If $l>2$, then 
set $d=2-2\cos\frac{\pi}{2l-1}$
and $d'=2-2\cos\frac{3\pi}{2l-1}$.
Then $d,d'\in (0,4)$ and $d<d'$.
As before,
\[
\mathcal{T}_{l-1}(e^{\frac{\pi}{2l-1}i})=\mathcal{T}_l(e^{\frac{\pi}{2l-1}i}),\quad
\mathcal{T}_{l-1}(e^{\frac{3\pi}{2l-1}i})=\mathcal{T}_l(e^{\frac{3\pi}{2l-1}i}).
\]
by Lemma \ref{lem:tau}.
Thus
\begin{eqnarray*}
\Phi_n(s+2+\frac{d}{s})
&=&-\left(\mathcal{T}_{l-1}(e^{\frac{\pi}{2l-1}i})-\left(1+\frac{d}{s}\right)\mathcal{T}_{l}(e^{\frac{\pi}{2l-1}i})\right)\\
&=& \frac{d}{s}\cdot\mathcal{T}_l(e^{\frac{\pi}{2l-1}i}),\\
\Phi_n(s+2+\frac{d'}{s})
&=& \frac{d'}{s}\cdot\mathcal{T}_l(e^{\frac{3\pi}{2l-1}i}).
\end{eqnarray*}
Since these values have distinct signs,
we have a solution $T$ with $s+2+d/s<T<s+2+d'/s$, if $l>2$, as before.

When $l=2$, 
we have
\[
\Phi_{-2}(s+2+\frac{1}{s})=\frac{1}{s}>0,\quad  \Phi_{-2}(s+2+\frac{2}{s})=-1<0.
\]
Hence there exists a solution $T$ with $s+2+1/s<T<s+2+2/s$.
\end{proof}

%%%%%%%%%%%%%%%%%%%%%%%%%%%%%%%%%%%%%%%%
%%%%
\section{Longitudes}

Recall that $\rho_s:G\to SL_2(\mathbb{R})$ is the representation
defined by (\ref{eq:rho}).
Two real parameters $s$ and $t$ are chosen so that $\phi_n(s,t)=0$.
In this section, we examine the image of the longitude $\mathcal{L}$ of $G$
under $\rho_s$.
Throughout the section,
let 
\[
\rho_s(w)=\begin{pmatrix}
w_{1,1} & w_{1,2} \\
w_{2,1} & w_{2,2}
\end{pmatrix},\quad 
\rho_s(w^n)=\begin{pmatrix}
u_{1,1} & u_{1,2} \\
u_{2,1} & u_{2,2}
\end{pmatrix}
\]
and 
$\sigma=\frac{s(\sqrt{t}-\frac{1}{\sqrt{t}})^2}{(\sqrt{t}-\frac{1}{\sqrt{t}})^2-s}$.

\begin{lemma}\label{lem:ww*}
For $w_*^n$, we have
$\rho_s(w_*^n)=\begin{pmatrix}
u_{1,1} & \frac{u_{2,1}}{\sigma}\\
u_{1,2}\sigma & u_{2,2}
\end{pmatrix}$.
\end{lemma}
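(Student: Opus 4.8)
The plan is to exploit the structural relation between $w$ and $w_*$: since $w_*=yx^{-1}y^{-1}x$ is obtained from $w=xy^{-1}x^{-1}y$ by reversing the order of the letters, and transposition reverses the order of a matrix product (that is, $(AB)^T=B^TA^T$), the matrix $\rho_s(w_*)$ should be close to $\rho_s(w)^T$, up to the fact that transposition also transposes each individual generator image. Writing $X=\rho_s(x)$ and $Y=\rho_s(y)$, I would therefore first seek a diagonal matrix $D=\mathrm{diag}(a,b)$ realizing $DXD^{-1}=X^T$ and $DYD^{-1}=Y^T$ simultaneously. Because $X$ is diagonal, the first equation holds for every diagonal $D$, so only the condition coming from $Y$ is constraining.

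For that condition, conjugating $Y$ by $\mathrm{diag}(a,b)$ fixes the diagonal entries and multiplies the $(1,2)$- and $(2,1)$-entries by $a/b$ and $b/a$ respectively; hence $DYD^{-1}=Y^T$ collapses to the single scalar equation $(a/b)\,Y_{1,2}=Y_{2,1}$, the $(2,1)$-entry giving the reciprocal condition automatically and the diagonal matching for free. Substituting the entries of $Y$ from (\ref{eq:rho}), a short computation shows this forces
\[
\frac{a}{b}=\frac{Y_{2,1}}{Y_{1,2}}=\frac{-s}{\frac{s}{(\sqrt{t}-\frac{1}{\sqrt{t}})^2}-1}=\frac{s(\sqrt{t}-\frac{1}{\sqrt{t}})^2}{(\sqrt{t}-\frac{1}{\sqrt{t}})^2-s}=\sigma,
\]
which is exactly the quantity introduced before the lemma. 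I would then fix $D=\mathrm{diag}(\sigma,1)$. This is the one place where the explicit form of $\rho_s(y)$ enters, and recognizing that the simultaneous-conjugation condition has a solution given precisely by $\sigma$ is the real content of the argument; everything else is formal bookkeeping with inverses and transposes, which is where I expect the only genuine risk of error to lie.

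With such a $D$ in hand, since $D$ is a homomorphism-compatible conjugation and $D,X$ are diagonal (hence commute), I would compute
\[
D\,\rho_s(w_*)\,D^{-1}=(DYD^{-1})(DX^{-1}D^{-1})(DY^{-1}D^{-1})(DXD^{-1})=Y^T(X^T)^{-1}(Y^T)^{-1}X^T=\rho_s(w)^T,
\]
so that $\rho_s(w_*)=D^{-1}\rho_s(w)^T D$. Raising to the $n$-th power and using $(\rho_s(w)^T)^n=(\rho_s(w)^n)^T$ gives $\rho_s(w_*^n)=D^{-1}(\rho_s(w^n))^T D$. Finally, conjugating $(\rho_s(w^n))^T=\bigl(\begin{smallmatrix}u_{1,1}&u_{2,1}\\u_{1,2}&u_{2,2}\end{smallmatrix}\bigr)$ by $D=\mathrm{diag}(\sigma,1)$ leaves the diagonal entries unchanged and scales the $(1,2)$- and $(2,1)$-entries by $1/\sigma$ and $\sigma$, yielding exactly the asserted matrix $\bigl(\begin{smallmatrix}u_{1,1}&u_{2,1}/\sigma\\ u_{1,2}\sigma&u_{2,2}\end{smallmatrix}\bigr)$.
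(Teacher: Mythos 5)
Your proof is correct, and it rests on the same key quantity as the paper's: the ratio $\rho_s(y)_{2,1}/\rho_s(y)_{1,2}=\sigma$, which is what makes a $\sigma$-twisted transpose compatible with the representation. The difference is in packaging. The paper never isolates a conjugating matrix $D$; instead it computes the four two-letter blocks $\rho_s(xy^{-1})$, $\rho_s(y^{-1}x)$, $\rho_s(x^{-1}y)$, $\rho_s(yx^{-1})$ explicitly, observes that reversal of each block corresponds to the entry operation $\bigl(\begin{smallmatrix}a&b\\c&d\end{smallmatrix}\bigr)\mapsto\bigl(\begin{smallmatrix}a&c/\sigma\\b\sigma&d\end{smallmatrix}\bigr)$, and then verifies by a direct $2\times 2$ multiplication that this operation is anti-multiplicative, so that it carries $\rho_s(w^n)$ to $\rho_s(w_*^n)$. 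You instead recognize that very operation as $M\mapsto D^{-1}M^TD$ with $D=\mathrm{diag}(\sigma,1)$, check the single scalar condition $DYD^{-1}=Y^T$ at the level of the generators (the condition on $X$ being vacuous since $X$ is diagonal), and then get anti-multiplicativity for free from $(AB)^T=B^TA^T$. Your route is slightly more structural and requires less computation (one ratio rather than four products plus a multiplicativity check), and it explains \emph{why} $\sigma$ appears; the paper's route works only with the blocks into which $w$ and $w_*$ naturally factor and so never needs the relation at the generator level. One tiny point of care: in your last step you conjugate by $D^{-1}(\cdot)D$, which scales the $(1,2)$-entry by $1/\sigma$ and the $(2,1)$-entry by $\sigma$ as you say, so the bookkeeping does come out to the asserted matrix; it is also implicitly used that $Y_{1,2}\ne 0$, which holds since $T-2>s$ by Proposition \ref{prop:root}, exactly as the paper needs for $\sigma$ to be defined.
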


\begin{proof}
By a direct calculation,
\[
\rho_s(xy^{-1})=\begin{pmatrix}
\frac{t-1+st}{t-1} & \frac{s\sqrt{t}}{\sigma} \\
\frac{s}{\sqrt{t}} & \frac{t-1-s}{t-1}
\end{pmatrix}, \quad
\rho_s(y^{-1}x)=\begin{pmatrix}
\frac{t-1+st}{t-1} & \frac{s}{\sqrt{t}\sigma} \\
s\sqrt{t} & \frac{t-1-s}{t-1}
\end{pmatrix},
\]
\[
\rho_s(x^{-1}y)=\begin{pmatrix}
\frac{t-1-s}{t-1} & -\frac{s}{\sqrt{t}\sigma} \\
-s\sqrt{t} & \frac{t-1+st}{t-1}
\end{pmatrix}, \quad
\rho_s(yx^{-1})=\begin{pmatrix}
\frac{t-1-s}{t-1} & -\frac{s\sqrt{t}}{\sigma} \\
-\frac{s}{\sqrt{t}} & \frac{t-1+st}{t-1}
\end{pmatrix}.
\]
Thus we see that the $(1,2)$-entry of $\rho_s(y^{-1}x)$ is
the $(2,1)$-entry of $\rho_s(xy^{-1})$ divided by $\sigma$,
the $(2,1)$-entry of $\rho_s(y^{-1}x)$ is
the $(1,2)$-entry of $\rho_s(xy^{-1})$ multiplied by $\sigma$,
and the others of $\rho_s(y^{-1}x)$ coincide with those of $\rho_s(xy^{-1})$.
The same relation between entries holds for $\rho_s(x^{-1}y)$ and $\rho_s(yx^{-1})$.

In general, such a relation is preserved under the matrix multiplication;
\[
\begin{pmatrix}
a & b \\
c & d
\end{pmatrix}
\begin{pmatrix}
p & q \\
r & s
\end{pmatrix}
=
\begin{pmatrix}
ap+br & aq+bs \\
cp+dr & cq+ds
\end{pmatrix},
\]
\[
\begin{pmatrix}
p & \frac{r}{\sigma} \\
q\sigma & s
\end{pmatrix}
\begin{pmatrix}
a & \frac{c}{\sigma} \\
b\sigma & d
\end{pmatrix}
=
\begin{pmatrix}
ap+br & \frac{cp+dr}{\sigma} \\
(aq+bs)\sigma & cq+ds
\end{pmatrix}.
\]
Thus we can confirm that the same relation holds for $\rho_s(w^n)$ and $\rho_s(w_*^n)$.
\end{proof}

\begin{proposition}\label{prop:longi}
For the longitude $\mathcal{L}$ of $G$,
the matrix $\rho_s(\mathcal{L})$ is diagonal, and
the $(1,1)$-entry of $\rho_s(\mathcal{L})$ is a positive real number.
\end{proposition}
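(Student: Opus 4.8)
The plan is to use the factorization $\mathcal{L}=w_*^n w^n$ from Lemma~\ref{lem:knotgroup} together with the explicit form of $\rho_s(w_*^n)$ supplied by Lemma~\ref{lem:ww*}. Writing $\rho_s(w^n)=\begin{pmatrix} u_{1,1} & u_{1,2}\\ u_{2,1} & u_{2,2}\end{pmatrix}$, I would first multiply out
\[
\rho_s(\mathcal{L})=\rho_s(w_*^n)\,\rho_s(w^n)
=\begin{pmatrix} u_{1,1} & \tfrac{u_{2,1}}{\sigma}\\ u_{1,2}\sigma & u_{2,2}\end{pmatrix}
\begin{pmatrix} u_{1,1} & u_{1,2}\\ u_{2,1} & u_{2,2}\end{pmatrix}.
\]
Its $(1,1)$-entry is $u_{1,1}^2+u_{2,1}^2/\sigma$, and its $(1,2)$- and $(2,1)$-entries are $u_{1,1}u_{1,2}+u_{2,1}u_{2,2}/\sigma$ and $\sigma u_{1,1}u_{1,2}+u_{2,1}u_{2,2}$, respectively. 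Since these two off-diagonal expressions differ exactly by the factor $\sigma\neq 0$, they vanish or survive together, so the diagonality of $\rho_s(\mathcal{L})$ is equivalent to the single identity $\sigma u_{1,1}u_{1,2}+u_{2,1}u_{2,2}=0$.

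For the diagonality I would argue conceptually rather than verify this identity by hand. The preferred longitude $\mathcal{L}$ and the meridian $x$ together generate the peripheral subgroup of $K_n$, so $\mathcal{L}x=x\mathcal{L}$ in $G$; because $\phi_n(s,t)=0$ ensures that $\rho_s$ is an honest representation of $G$, the matrix $\rho_s(\mathcal{L})$ must commute with $\rho_s(x)=\mathrm{diag}(\sqrt{t},1/\sqrt{t})$. As $t>1$ the diagonal entries $\sqrt{t}$ and $1/\sqrt{t}$ are distinct, and any $2\times 2$ matrix commuting with a diagonal matrix having distinct diagonal entries is itself diagonal; this forces the off-diagonal entries above to vanish. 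As a self-contained alternative one may instead expand $\rho_s(w^n)=\tau_n\rho_s(w)-\tau_{n-1}I$ by Cayley--Hamilton and check $\sigma u_{1,1}u_{1,2}+u_{2,1}u_{2,2}=0$ directly, eliminating $\tau_{n-1}$ by the Riley relation $\tau_{n+1}=(T-1-s)\tau_n$; but this route is more computational.

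It then remains to show that the $(1,1)$-entry $u_{1,1}^2+u_{2,1}^2/\sigma$ is a positive real number. Since $\rho_s$ takes values in $SL_2(\mathbb{R})$, every $u_{i,j}$ is real, so $u_{1,1}^2\ge 0$ and $u_{2,1}^2\ge 0$. The decisive point is the sign of $\sigma$: rewriting $\sigma=s(T-2)/\{(T-2)-s\}$ using $(\sqrt{t}-1/\sqrt{t})^2=T-2$, the numerator is positive because $s>0$ and $T>2$, while the denominator is positive precisely because Proposition~\ref{prop:root} gives $T>s+2$, i.e. $T-2>s$. Hence $\sigma>0$ and the $(1,1)$-entry is nonnegative.

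Finally I would exclude the value $0$. If $u_{1,1}^2+u_{2,1}^2/\sigma=0$ then $u_{1,1}=u_{2,1}=0$, so the first column of $\rho_s(w^n)$ is zero and $\det\rho_s(w^n)=0$, contradicting $\rho_s(w^n)\in SL_2(\mathbb{R})$. Therefore the $(1,1)$-entry is strictly positive. The only genuinely delicate ingredient is the positivity of $\sigma$, which is exactly where the lower bound $s+2<T$ from Proposition~\ref{prop:root} is used.
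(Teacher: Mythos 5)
Your proposal is correct and follows essentially the same route as the paper: diagonality from the fact that $\rho_s(\mathcal{L})$ commutes with the diagonal matrix $\rho_s(x)\neq\pm I$, then the explicit product $\rho_s(w_*^n)\rho_s(w^n)$ via Lemma~\ref{lem:ww*} giving the $(1,1)$-entry $u_{1,1}^2+u_{2,1}^2/\sigma$, with positivity from $\sigma>0$ (using $T>s+2$ from Proposition~\ref{prop:root}) and from $\det\rho_s(w^n)=1$ forcing $(u_{1,1},u_{2,1})\neq(0,0)$.
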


\begin{proof}
The first assertion follows from the facts that
for a meridian $x$, 
$\rho_s(x)$ is diagonal but $\rho_s(x)\ne \pm I$ and that $x$ and $\mathcal{L}$ commute.

Since $\mathcal{L}=w_*^n w^n$ by Lemma \ref{lem:knotgroup}, Lemma \ref{lem:ww*} implies that
\begin{eqnarray*}
\rho_s(\mathcal{L})=\rho_s(w_*^n)\rho_s(w^n)&=&
\begin{pmatrix}
u_{1,1} & \frac{u_{2,1}}{\sigma} \\
u_{1,2}\sigma & u_{2,2}
\end{pmatrix}
\begin{pmatrix}
u_{1,1} & u_{1,2} \\
u_{2,1} & u_{2,2}
\end{pmatrix}
\\
&=&
\begin{pmatrix}
u_{1,1}^2+\frac{u_{2,1}^2}{\sigma} & u_{1,1}u_{1,2}+\frac{u_{2,1}u_{2,2}}{\sigma}\\ 
u_{1,1}u_{1,2}\sigma+u_{2,1}u_{2,2} & u_{1,2}^2\sigma+u_{2,2}^2
\end{pmatrix}.
\end{eqnarray*}

Since $\det \rho_s(w^n)=1$, at least one of $u_{1,1}$ and $u_{2,1}$ is non-zero.
Hence the $(1,1)$-entry is $u_{1,1}^2+u_{2,1}^2/\sigma$, which
is positive, because
$s>0$ and $(\sqrt{t}-1/\sqrt{t})^2-s=T-s-2>0$ from Proposition \ref{prop:root}.
\end{proof}

\begin{remark}\label{rem:longi}
Since $\rho_s(\mathcal{L})$ is diagonal,
we also obtain an equation $u_{1,1}u_{1,2}\sigma+u_{2,1}u_{2,2}=0$.
This will be used in the proof of Lemma \ref{lem:bs}.
\end{remark}

%%%%%%%%%%%%%

To diagonalize $W=\rho_s(w)$,
let $Q=\begin{pmatrix}
w_{1,2} & w_{1,2} \\
\lambda_+-w_{1,1} & \lambda_--w_{1,1}
\end{pmatrix}.$
From (\ref{eq:www}),
$w_{1,2}=(t-1+st)s/(\sigma\sqrt{t})$.
Since $s>0,t>1, \sigma>0$, we have $w_{1,2}\ne 0$.
Also, $\det Q=-w_{1,2}(\lambda_+-\lambda_-)$.
Then a direct calculation shows
$Q^{-1}WQ=\begin{pmatrix}
\lambda_+ & 0 \\
0 & \lambda_-
\end{pmatrix}$.

\begin{lemma}\label{lem:wij}
The entries of $W^n$ are given as follows.
\begin{eqnarray*}
u_{1,1}&=& w_{1,1}\tau_n-\tau_{n-1},\quad
u_{1,2}= w_{1,2}\tau_n,\\
u_{2,1}&=& w_{2,1}\tau_n,\quad
u_{2,2}= \tau_{n+1}-w_{1,1}\tau_n.
\end{eqnarray*}
\end{lemma}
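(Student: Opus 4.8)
The plan is to reduce the whole computation to a single closed formula for $W^{n}$ expressed in terms of $W$, the identity $I$, and the symmetric quantities $\tau_m$. The starting observation is that $\det W=1$, so the Cayley--Hamilton theorem gives
\[
W^{2}=(\mathrm{tr}\,W)\,W-I=(\lambda_{+}+\lambda_{-})\,W-I .
\]
This is precisely the relation that governs the sequence $\tau_m$ in (\ref{eq:rec}), which makes the identity
\[
W^{m}=\tau_{m}\,W-\tau_{m-1}\,I
\]
the natural thing to aim for, valid for \emph{every} integer $m$. Once this is in hand, the four entries of $W^{n}$ can simply be read off.

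To prove this identity I would argue that the two matrix-valued sequences $M_{m}:=W^{m}$ and $N_{m}:=\tau_{m}W-\tau_{m-1}I$ satisfy the same second-order linear recurrence and agree on two consecutive indices. Multiplying the Cayley--Hamilton relation by $W^{m-1}$ (legitimate for negative $m$ as well, since $W$ is invertible) shows
\[
M_{m+1}=(\lambda_{+}+\lambda_{-})\,M_{m}-M_{m-1},
\]
and the recursion (\ref{eq:rec}) gives the identical relation for $N_{m}$. For the initial values, $\tau_{1}=1$ and $\tau_{0}=0$ give $N_{1}=W=M_{1}$, while $\tau_{0}=0$ and $\tau_{-1}=-\tau_{1}=-1$ give $N_{0}=I=M_{0}$. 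Because a two-term recurrence is determined by two consecutive terms, $M_{m}=N_{m}$ for all integers $m$; packaging the induction this way automatically covers the negative values of $n$, which is essential since $n$ ranges over all integers with $n\ne 0,-1$.

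Finally I would extract the entries from $W^{n}=\tau_{n}W-\tau_{n-1}I$. The off-diagonal positions receive no contribution from $I$, so $u_{1,2}=w_{1,2}\tau_{n}$ and $u_{2,1}=w_{2,1}\tau_{n}$ are immediate, and the $(1,1)$-entry is $u_{1,1}=w_{1,1}\tau_{n}-\tau_{n-1}$. For the $(2,2)$-entry the identity first produces $u_{2,2}=w_{2,2}\tau_{n}-\tau_{n-1}$; to reach the stated form I substitute $w_{2,2}=\mathrm{tr}\,W-w_{1,1}=(\lambda_{+}+\lambda_{-})-w_{1,1}$ and apply (\ref{eq:rec}) once more, using $(\lambda_{+}+\lambda_{-})\tau_{n}-\tau_{n-1}=\tau_{n+1}$, to obtain $u_{2,2}=\tau_{n+1}-w_{1,1}\tau_{n}$.

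There is no genuinely hard step here; the content is entirely the recognition that powers of a determinant-one $2\times2$ matrix are controlled by the same Chebyshev-type recurrence as the $\tau_m$. The only point requiring care is uniformity in the sign of $n$, which is why I phrase the argument via two sequences obeying (\ref{eq:rec}) rather than a one-directional induction on positive $n$. As a fallback I would diagonalize directly through the matrix $Q$ introduced above, writing $W^{n}=Q\,\mathrm{diag}(\lambda_{+}^{n},\lambda_{-}^{n})\,Q^{-1}$ and simplifying with $\det Q=-w_{1,2}(\lambda_{+}-\lambda_{-})$; this yields the same four formulas but with heavier bookkeeping, so I prefer the recurrence argument.
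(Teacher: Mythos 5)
Your proof is correct, but it takes a genuinely different route from the paper. The paper diagonalizes $W$ explicitly, writing $W^n=Q\,\mathrm{diag}(\lambda_+^n,\lambda_-^n)\,Q^{-1}$ with the matrix $Q$ introduced just before the lemma, and then extracts each entry by direct computation (it carries out $u_{2,1}$ as a sample and omits the rest); this is exactly what you describe as your ``fallback.'' Your primary argument instead derives the single identity $W^m=\tau_m W-\tau_{m-1}I$ from Cayley--Hamilton for a determinant-one matrix, verified by matching the recurrence (\ref{eq:rec}) and two consecutive initial values, and then reads off all four entries at once. Your route is cleaner in two respects: it needs only $\det W=1$ (no invertibility of $Q$, hence no appeal to $w_{1,2}\ne 0$ or $\lambda_+\ne\lambda_-$ beyond what is already built into the definition of $\tau_m$), and it handles negative exponents uniformly rather than by inspection of the diagonal form. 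What the paper's computation buys in exchange is consistency with the surrounding text, since $Q$ and the diagonalization are already set up and reused; the mathematical content is the same, as both reduce to the Chebyshev-type recurrence satisfied by $\tau_m$. All four entry formulas in your write-up, including the conversion of $w_{2,2}\tau_n-\tau_{n-1}$ into $\tau_{n+1}-w_{1,1}\tau_n$ via $\mathrm{tr}\,W=\lambda_++\lambda_-$, check out.
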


\begin{proof}
This easily follows from $W^n=Q\begin{pmatrix}
\lambda_+^n & 0 \\
0 & \lambda_-^n 
\end{pmatrix}Q^{-1}$.

For example,
\begin{eqnarray*}
u_{2,1}&=&\frac{1}{\det Q}\Bigl(\lambda_+^n(\lambda_+-w_{1,1})(\lambda_--w_{1,1})+\lambda_-^n(\lambda_--w_{1,1})(w_{1,1}-\lambda_+)\Bigr)\\
&=&-\frac{\tau_n}{w_{1,2}}(\lambda_+-w_{1,1})(\lambda_--w_{1,1})\\
&=&-\frac{\tau_n}{w_{1,2}}(1-\mathrm{tr}(W)w_{1,1}+w_{1,1}^2).
\end{eqnarray*}
Since $\mathrm{tr}(W)=w_{1,1}+w_{2,2}$, we have
$1-\mathrm{tr}(W)w_{1,1}+w_{1,1}^2=1-w_{1,1}w_{2,2}=-w_{1,2}w_{2,1}$.
Thus $u_{2,1}=w_{2,1}\tau_n$.

We omit the others.
\end{proof}

\begin{lemma}\label{lem:bs}
Let $B_s$ be the $(1,1)$-entry of the matrix $\rho_s(\mathcal{L})$.
Then $B_s=-w_{2,1}/(w_{1,2}\sigma)$.
\end{lemma}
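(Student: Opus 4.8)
```latex
The plan is to compute the $(1,1)$-entry of $\rho_s(\mathcal{L})$ directly from the expression for $\rho_s(\mathcal{L})$ obtained in the proof of Proposition \ref{prop:longi}. There we found that this entry equals $u_{1,1}^2+u_{2,1}^2/\sigma$. The goal is to simplify this to the claimed closed form $-w_{2,1}/(w_{1,2}\sigma)$. My first step would be to substitute the formulas for $u_{1,1}$ and $u_{2,1}$ from Lemma \ref{lem:wij}, writing $u_{1,1}=w_{1,1}\tau_n-\tau_{n-1}$ and $u_{2,1}=w_{2,1}\tau_n$, so that
\[
B_s=(w_{1,1}\tau_n-\tau_{n-1})^2+\frac{w_{2,1}^2\tau_n^2}{\sigma}.
\]

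Rather than expanding this blindly, I would exploit the defining relation $\rho_s(w^nx)=\rho_s(yw^n)$, equivalently the vanishing of the Riley polynomial $\phi_n(s,T)=\tau_{n+1}-(T-1-s)\tau_n=0$, which must enter in order to collapse the expression. The alternative and cleaner route is to use Remark \ref{rem:longi}: since $\rho_s(\mathcal{L})$ is diagonal, the off-diagonal relation $u_{1,1}u_{1,2}\sigma+u_{2,1}u_{2,2}=0$ holds. Substituting the Lemma \ref{lem:wij} values $u_{1,2}=w_{1,2}\tau_n$ and $u_{2,2}=\tau_{n+1}-w_{1,1}\tau_n$ into this relation gives an algebraic identity linking $u_{1,1}$, $w_{1,1}$, $w_{1,2}$, $w_{2,1}$, $\tau_n$, and $\tau_{n+1}$. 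Combined with $\det\rho_s(w^n)=u_{1,1}u_{2,2}-u_{1,2}u_{2,1}=1$, I expect to be able to solve for $B_s=u_{1,1}^2+u_{2,1}^2/\sigma$ and reduce it, after cancellation, to $-w_{2,1}/(w_{1,2}\sigma)$.

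The key steps in order would be: (i) record $B_s=u_{1,1}^2+u_{2,1}^2/\sigma$ from Proposition \ref{prop:longi}; (ii) substitute the four entries of $W^n$ from Lemma \ref{lem:wij}; (iii) feed in the off-diagonal vanishing from Remark \ref{rem:longi} together with the determinant-one condition to eliminate the $\tau$-dependence; and (iv) simplify using the explicit entry $w_{1,2}=(t-1+st)s/(\sigma\sqrt{t})$ and the corresponding $w_{2,1}$ from \eqref{eq:www}, checking that the factor $-w_{2,1}/(w_{1,2}\sigma)$ emerges. I anticipate the main obstacle to be the bookkeeping in step (iii): the expression $u_{1,1}^2+u_{2,1}^2/\sigma$ is quadratic in the $\tau_m$, and turning it into the strikingly simple answer requires the Riley relation and the determinant condition to conspire precisely, so the hard part will be organizing the algebra so that the $\tau_n$-terms cancel cleanly rather than verifying any deep structural fact.
```
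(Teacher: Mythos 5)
Your plan is essentially the paper's proof: the identity $B_s=u_{1,1}^2+u_{2,1}^2/\sigma$, the off-diagonal vanishing $u_{1,1}u_{1,2}\sigma+u_{2,1}u_{2,2}=0$ from Remark \ref{rem:longi}, the determinant condition $u_{1,1}u_{2,2}-u_{1,2}u_{2,1}=1$, and the entry formulas of Lemma \ref{lem:wij} are exactly the ingredients used there. The algebra collapses more cleanly than you anticipate: multiplying by $u_{1,2}$ one gets
\[
u_{1,2}B_s=u_{1,1}^2u_{1,2}+\frac{u_{2,1}}{\sigma}\bigl(u_{1,1}u_{2,2}-1\bigr)
=u_{1,1}^2u_{1,2}-u_{1,1}^2u_{1,2}-\frac{u_{2,1}}{\sigma}=-\frac{u_{2,1}}{\sigma},
\]
with no $\tau$-bookkeeping at all. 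The one point your sketch glosses over is the final cancellation: writing $u_{1,2}=w_{1,2}\tau_n$ and $u_{2,1}=w_{2,1}\tau_n$ and passing from $w_{1,2}\tau_n B_s=-w_{2,1}\tau_n/\sigma$ to the claimed formula requires $\tau_n\ne 0$. This is where the Riley equation actually enters (not in collapsing the quadratic expression, as you suggest): if $\tau_n=0$, then $\phi_n(s,t)=\tau_{n+1}-(T-1-s)\tau_n=0$ forces $\tau_{n+1}=0$, and the recursion \eqref{eq:rec} then forces $\tau_m=0$ for all $m$, contradicting $\tau_1=1$. Without this step your ``after cancellation'' is unjustified; with it, your argument is complete and coincides with the paper's.
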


\begin{proof}
As noted in Remark \ref{rem:longi}, $u_{1,1}u_{1,2}\sigma+u_{2,1}u_{2,2}=0$.
Since $\det W^n=u_{1,1}u_{2,2}-u_{1,2}u_{2,1}=1$,
we have
\begin{eqnarray*}
u_{1,2}B_s &=& u_{1,1}^2u_{1,2}+\frac{u_{1,2}u_{2,1}^2}{\sigma}\\
  &=& u_{1,1}^2u_{1,2}+\frac{u_{2,1}}{\sigma}(u_{1,1}u_{2,2}-1)\\       
  &=& u_{1,1}^2u_{1,2}+\frac{u_{1,1}}{\sigma}(-u_{1,1}u_{1,2}\sigma)-\frac{u_{2,1}}{\sigma}\\
  &=& -\frac{u_{2,1}}{\sigma}.
\end{eqnarray*}

By Lemma \ref{lem:wij},
$u_{1,2}=w_{1,2}\tau_n$.
As remarked above Lemma \ref{lem:wij}, $w_{1,2}\ne 0$.
If $u_{1,2}=0$, then $\tau_n=0$.
But this implies
$\tau_{n+1}=0$, because $\phi_n(s,t)=\tau_{n+1}-(t+1/t-1-s)\tau_n=0$.
From the recursive relation, this implies $\tau_m=0$ for all $m$.
But this is absurd, because $\tau_1=1$.
Hence $u_{1,2}\ne 0$, so $B_s=-u_{2,1}/(u_{1,2}\sigma)$.
From Lemma \ref{lem:wij} again,
$u_{1,2}=w_{1,2}\tau_n$ and $u_{2,1}=w_{2,1}\tau_n$.
Thus $B_s=-w_{2,1}/(w_{1,2}\sigma)$.
\end{proof}

%%%%%%%%%%%%%%%%%%%%%

\section{Limits}

Let $r=p/q$ be a rational number, and let $M_n(r)$ denote
the resulting manifold by $r$-filling on the knot exterior $M_n$ of $K_n$.
In other words, $M_n(r)$ is obtained by 
attaching a solid torus $V$ to $M_n$ along their boundaries so that
the loop $x^p\mathcal{L}^q$ bounds a meridian disk of $V$, where
$x$ and $\mathcal{L}$ are a meridian and longitude of $K_n$.

Our representation $\rho_s: G\to SL_2(\mathbb{R})$ induces
a homomorphism $\pi_1(M_n(r))\to SL_2(\mathbb{R})$
if and only if $\rho_s(x)^p\rho_s(\mathcal{L})^q=I$.
Since both of $\rho_s(x)$ and $\rho_s(\mathcal{L})$ are diagonal
(see (\ref{eq:rho}) and Proposition \ref{prop:longi}),
this is equivalent to the equation
\begin{equation}\label{eq:slope}
A_s^p B_s^q=1,
\end{equation}
where $A_s$ and $B_s$ are the $(1,1)$-entries of $\rho_s(x)$ and $\rho_s(\mathcal{L})$, respectively.
We remark that $A_s=\sqrt{t}\ (>1)$ is a positive real number,
so is $B_s$ by Proposition \ref{prop:longi}.
Hence the equation (\ref{eq:slope}) is furthermore equivalent to the equation
\begin{equation}
-\frac{\log B_s}{\log A_s}=\frac{p}{q}.
\end{equation}

Let $g:(0,\infty)\to \mathbb{R}$ be a function defined by
\[
g(s)=-\frac{\log B_s}{\log A_s}.
\]

We will examine the image of $g$.

\begin{lemma}\label{lem:key-limit}
\begin{itemize}
\item[(1)] If $|n|>1$, then $\displaystyle\lim_{s\to+0}t=\infty$.
If $n=1$, then $\displaystyle\lim_{s\to+0}t=(3+\sqrt{5})/2$.
\item[(2)] $\displaystyle\lim_{s\to\infty}t=\infty$.
\item[(3)] $\displaystyle\lim_{s\to\infty}(t-s)=2$.
\item[(4)] $\displaystyle\lim_{s\to\infty}\frac{s}{t}=1$.
%\item[(5)] $\displaystyle\lim_{s\to\infty}t(t-s-2)$ exists.
\end{itemize}
\end{lemma}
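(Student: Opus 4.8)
The plan is to derive all four limits from the bounds on $T = t + 1/t$ supplied by Proposition \ref{prop:root}, together with the elementary relation $t = (T+\sqrt{T^2-4})/2$, which recovers the solution $t>1$ from $T>2$. For each $s>0$ I work with the chosen solution $T=T(s)$ satisfying $s+2 < T < s+2+4/s$, and, when $n\neq 1$, the sharper lower bound $s+2+c/s < T$ with $c>0$ a constant depending only on $n$. Since these inequalities trap $T(s)$ between two explicit functions of $s$, every limit will follow by the squeeze theorem; in particular no continuity of the family $s\mapsto T(s)$ is required. Before starting I would record two consequences of $t+1/t=T$ with $t>1$: because $0<1/t<1$ we have $T-1<t<T$, so $t\to\infty$ precisely when $T\to\infty$, and in that event $1/t\to 0$.

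For part (1) with $|n|>1$ I invoke the strengthened bound $T>s+2+c/s$: as $s\to+0$ the term $c/s\to\infty$, so $T\to\infty$ and hence $t\to\infty$. This is exactly where the constant $c>0$ is needed, which is why Proposition \ref{prop:root} was established in that sharper form. When $n=1$ the bound degenerates, but here the Riley polynomial is the linear $\phi_1(s,T)=-(s+1)T+s^2+3s+3$ from Example \ref{ex:riley}, so $T(s)=(s^2+3s+3)/(s+1)\to 3$ as $s\to+0$, and the quadratic formula gives $t\to(3+\sqrt{5})/2$. For part (2), from $T>s+2$ I get $T\to\infty$ as $s\to\infty$, hence $t\to\infty$.

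For parts (3) and (4) I combine the squeeze $2<T-s<2+4/s$, which yields $T-s\to 2$ as $s\to\infty$, with the identity $t-s=(T-s)-1/t$. Since part (2) gives $t\to\infty$ and therefore $1/t\to 0$, part (3) follows at once: $t-s\to 2$. Part (4) is then immediate by writing $s/t = 1/(1+(t-s)/s)$ and letting $s\to\infty$, using part (3).

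The computation is short, and I expect no substantive obstacle. The only two places that require more than the basic estimate $s+2<T<s+2+4/s$ are the $|n|>1$ case of part (1), whose conclusion $t\to\infty$ rests on the positive constant $c$ in the lower bound, and the isolated case $n=1$, which must be handled separately because there the lower bound degenerates and the explicit linear solution is used instead. Keeping these two special situations distinct is the main bookkeeping point.
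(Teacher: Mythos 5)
Your proof is correct and follows essentially the same route as the paper: the sharpened lower bound $T>s+2+c/s$ for $|n|>1$, the explicit linear solution for $n=1$, and the squeeze $s+2<T<s+2+4/s$ for the limits as $s\to\infty$. The only cosmetic difference is in part (4), where you pass through $t-s\to 2$ while the paper uses $T/s\to 1$; both are immediate from the same bounds.
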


\begin{proof}
(1) 
If $n=1$, then the equation $\phi_1(s,T)=0$ has the unique solution
$T=(s^2+3s+3)/(s+1)$ for a given $s>0$, so $\lim_{s\to+0}T=3$.
Since $t=(T+\sqrt{T^2-4})/2$, we have $\lim_{s\to+0}t=(3+\sqrt{5})/2$.

Assume $|n|>1$.
From Proposition \ref{prop:root}, we have $s+2+c/s<T$, where $c$ is a positive constant.
Hence $\lim_{s\to+0}T=\lim_{s\to+0}t=\infty$.

(2) As $T>s+2$, $\lim_{s\to\infty}T=\lim_{s\to\infty}t=\infty$.

(3) Since $s+2<t+1/t<s+2+4/s$, (2) implies $\lim_{s\to\infty}(t-s)=2$.

(4) From $s+2<T<s+2+4/s$ again, we have $\lim_{s\to\infty}T/s=1$, which
implies $\lim_{s\to\infty}s/t=1$
\end{proof}

%%%%%%%%%%%%%%%%%%%%%%%%%%%%%%%%%%%%%%%%%%%%%%%%%%%%%
\begin{lemma}\label{lem:bslimit}
\begin{itemize}
\item[(1)] $\displaystyle\lim_{s\to+0}B_s=1$.
\item[(2)] $\displaystyle\lim_{s\to\infty}B_s\,t^2=1$.
\end{itemize}
\end{lemma}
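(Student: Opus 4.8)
The plan is to first collapse $B_s$ into an elementary rational function of the two real parameters $s$ and $t$, and then read off both limits directly from Lemma~\ref{lem:key-limit}. By Lemma~\ref{lem:bs} we have $B_s=-w_{2,1}/(w_{1,2}\sigma)$, so everything reduces to substituting the explicit entries of $W=\rho_s(w)$ from (\ref{eq:www}) together with the definition of $\sigma$ and simplifying.

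The crucial simplification is the product $w_{1,2}\sigma$. Writing $D=(\sqrt{t}-\frac{1}{\sqrt{t}})^2$, the entry $w_{1,2}=\frac{t-1+st}{\sqrt{t}}\cdot\frac{D-s}{D}$ carries exactly the factor $\frac{D-s}{D}$, while $\sigma=\frac{sD}{D-s}$ carries its reciprocal (up to the scalar $s$). Hence the two factors involving $D$ cancel and $w_{1,2}\sigma=s(t-1+st)/\sqrt{t}$. Since $w_{2,1}=s(1+s-t)/\sqrt{t}$, I expect
\[
B_s=-\frac{w_{2,1}}{w_{1,2}\sigma}=\frac{t-s-1}{t-1+st}.
\]
This closed form is the heart of the argument; once it is in hand, no further information about the word $w^n$ (and none of Lemma~\ref{lem:wij}) is needed.

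For part (1) I would divide numerator and denominator by $t$, giving $B_s=\frac{1-(s+1)/t}{s+1-1/t}$. When $|n|>1$, Lemma~\ref{lem:key-limit}(1) gives $t\to\infty$ as $s\to+0$, so $(s+1)/t\to 0$ and $1/t\to 0$, whence $B_s\to 1$. When $n=1$, $t\to(3+\sqrt{5})/2$ is finite while $s\to 0$, so $st\to 0$ and again $B_s\to\frac{t-1}{t-1}=1$. For part (2) I would instead divide numerator and denominator by $st$ and write $B_s t^2=\frac{(t-s-1)\,(t/s)}{1+(t-1)/(st)}$; by Lemma~\ref{lem:key-limit}(3),(4) the numerator tends to $1\cdot 1$ and the denominator to $1$, so $B_s t^2\to 1$.

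The main obstacle is really only the bookkeeping in the first step: getting the cancellation in $w_{1,2}\sigma$ exactly right, and, in part (1) with $|n|>1$, handling the simultaneous limits $s\to 0$ and $t\to\infty$ so that the competing terms $(s+1)/t$ and $st$ are controlled. Dividing through by $t$ (rather than taking limits termwise in $t-1+st$) sidesteps the need to know the precise rate at which $st$ stays bounded, which is why I would organize the two parts with different normalizations.
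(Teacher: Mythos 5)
Your proposal is correct and follows essentially the same route as the paper: both reduce $B_s$ via Lemma \ref{lem:bs} to the closed form $B_s=\frac{t-s-1}{-1+(1+s)t}$ (your $t-1+st$ is the same denominator) and then apply Lemma \ref{lem:key-limit}. Your normalizations (dividing by $t$ in part (1), by $st$ in part (2)) are minor cosmetic variants of the paper's decomposition $B_st^2=(t-s-1)\cdot\frac{t^2}{-1+(1+s)t}$, and your explicit treatment of the $n=1$ versus $|n|>1$ cases in part (1) is a small extra care the paper leaves implicit.
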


\begin{proof}
(1) By Lemma \ref{lem:bs},
\[
B_s=-\frac{w_{2,1}}{w_{1,2}\sigma}=\frac{t-s-1}{-1+(1+s)t}.
\]
Lemma \ref{lem:key-limit}(1) implies $\lim_{s\to+0}B_s=1$.

(2) 
We decompose $B_s t^2$ as
\[
B_s t^2= (t-s-1)\cdot \frac{t^2}{-1+(1+s)t}.
\]
From Lemma \ref{lem:key-limit}(3) and (4),
\[
\lim_{s\to\infty}(t-s-1)=\lim_{s\to\infty}\frac{t^2}{-1+(1+s)t}=1.
\]
Hence $\lim_{s\to\infty}B_st^2=1$.
\end{proof}

%%%%%%%%%%%%%%%%%%%%%%

\begin{proposition}\label{prop:g-image}
The image of $g$ contains an open interval $(0,4)$.
\end{proposition}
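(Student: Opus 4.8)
The plan is to combine continuity of $g$ with the Intermediate-Value Theorem, reducing the statement to the computation of the two boundary limits $\lim_{s\to+0}g(s)$ and $\lim_{s\to\infty}g(s)$. First I would record that $g$ is continuous on $(0,\infty)$: both $A_s=\sqrt{t}$ and $B_s=(t-s-1)/(-1+(1+s)t)$ are continuous, strictly positive functions of the pair $(s,t)$, and along the branch of solutions of $\phi_n(s,t)=0$ furnished by Proposition \ref{prop:root} the parameter $t$ can be taken as a continuous function of $s$ (for $n=1$ it is the explicit rational expression $t=(T+\sqrt{T^2-4})/2$ with $T=s+2+1/(s+1)$, and for $|n|>1$ the branch lying in the window $(s+2,\,s+2+4/s)$ varies continuously with $s$). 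Since $A_s>1$ we have $\log A_s>0$, so $g$ is well defined and continuous.

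Next I would compute the limit as $s\to\infty$, which is the heart of the matter. Writing $\log A_s=\tfrac12\log t$ and $\log B_s=\log(B_st^2)-2\log t$, I can rearrange $g$ into the convenient form
\[
g(s)=-\frac{\log B_s}{\log A_s}=\frac{2\log t-\log(B_st^2)}{\tfrac12\log t}=4-\frac{2\log(B_st^2)}{\log t}.
\]
By Lemma \ref{lem:bslimit}(2) we have $B_st^2\to 1$, hence $\log(B_st^2)\to 0$, while Lemma \ref{lem:key-limit}(2) gives $t\to\infty$, hence $\log t\to\infty$. Therefore the correction term tends to $0$ and $\lim_{s\to\infty}g(s)=4$.

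Then I would treat the limit as $s\to+0$. Here Lemma \ref{lem:bslimit}(1) gives $B_s\to 1$, so the numerator $-\log B_s\to 0$. For the denominator $\log A_s=\tfrac12\log t$, Lemma \ref{lem:key-limit}(1) shows $t\to\infty$ when $|n|>1$ and $t\to(3+\sqrt{5})/2>1$ when $n=1$; in either case $\log A_s$ is bounded away from $0$, tending to $+\infty$ or to the positive constant $\tfrac12\log\frac{3+\sqrt{5}}{2}$. Consequently $\lim_{s\to+0}g(s)=0$. Combining the two limits with continuity, the Intermediate-Value Theorem shows that $g$ attains every value strictly between $0$ and $4$, so the image of $g$ contains the open interval $(0,4)$.

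The main obstacle I anticipate is the $s\to\infty$ analysis. A crude estimate only shows that $\log B_s$ and $2\log t$ diverge at comparable rates, which pins $g$ to $4$ merely up to an undetermined error; extracting the exact value $4$ requires the sharp second-order asymptotic $B_st^2\to 1$ of Lemma \ref{lem:bslimit}(2) to force the term $2\log(B_st^2)/\log t$ to vanish. A secondary point needing care is the continuity of the chosen branch $s\mapsto t(s)$ across all of $(0,\infty)$, since $\phi_n$ has degree $|n|$ in $T$ and a priori offers several solutions; one must check that the branch singled out in Proposition \ref{prop:root} is globally continuous, so that the Intermediate-Value Theorem may be applied on a single connected interval.
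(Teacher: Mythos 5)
Your proof is correct and follows essentially the same route as the paper: both compute $\lim_{s\to+0}g(s)=0$ from Lemma \ref{lem:bslimit}(1) and $\lim_{s\to\infty}g(s)=4$ from Lemma \ref{lem:bslimit}(2) (the paper writes the latter as $\log B_s+2\log t\to 0$, equivalent to your rearrangement $g(s)=4-2\log(B_st^2)/\log t$), then concludes by the Intermediate-Value Theorem. Your explicit attention to the continuity of the branch $s\mapsto t(s)$ is a point the paper leaves implicit, but it does not change the argument.
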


\begin{proof}
By Lemma \ref{lem:bslimit}(1),
$\lim_{s\to+0}\log B_s=0$.
Hence
\[
\lim_{s\to +0}g(s)=-\lim_{s\to +0}\frac{\log B_s}{\log A_s}=-\lim_{s\to+0}\frac{\log B_s}{\log\sqrt{t}}=0.
\]

Also, we have $\lim_{s\to\infty}(\log B_s+2\log t)=0$ by Lemma \ref{lem:bslimit}(2).
Thus
\[
\lim_{s\to \infty}g(s)=-\lim_{s\to\infty}\frac{\log B_s}{\log A_s}=
-\lim_{s\to\infty}\frac{2\log B_s}{\log t}=4.
\]
Hence the image of $g$ contains an interval $(0,4)$.
\end{proof}

A computer experiment suggests that the image of $g$ equals to $(0,4)$,
but we do not need this.

%%%%%

\section{Universal covering group}\label{sec:univ}

We briefly review the description of the universal covering group
of $SL_2(\mathbb{R})$.

Let
\[
SU(1,1)=\left\{
\begin{pmatrix}
\alpha & \beta \\
\bar{\beta} & \bar{\alpha}\\
\end{pmatrix}
\mid |\alpha|^2-|\beta|^2=1\right\}
\]
be the special unitary group over $\mathbb{C}$ of signature $(1,1)$.
It is well known that $SU(1,1)$ is conjugate to $SL_2(\mathbb{R})$ in $GL_2(\mathbb{C})$.
The correspondence is given by
$\psi:SL_2(\mathbb{R})\to SU(1,1)$, sending
$A\mapsto JAJ^{-1}$, where
\[J=
\begin{pmatrix}
1 & -i \\
1 & i
\end{pmatrix}.
\]
Thus
\[
\psi:
\begin{pmatrix}
a & b \\
c & d
\end{pmatrix}
\mapsto 
\begin{pmatrix}
\frac{a+d+(b-c)i}{2} & \frac{a-d-(b+c)i}{2} \\
\frac{a-d+(b+c)i}{2} & \frac{a+d-(b-c)i}{2}
\end{pmatrix}.
\]

There is a parametrization of $SU(1,1)$ by $(\gamma,\omega)$
 where $\gamma=\beta/\alpha$ and $\omega=\arg \alpha$ defined mod $2\pi$ (see \cite{B}).
Thus 
$SU(1,1)=\{(\gamma,\omega) \mid |\gamma|<1, -\pi\le \omega<\pi\}$.
Topologically, $SU(1,1)$ is an open solid torus $\Delta\times S^1$, where
$\Delta=\{\gamma\in \mathbb{C}\mid |\gamma|<1\}$. 
The group operation is given by
$(\gamma,\omega)(\gamma',\omega')=(\gamma'',\omega'')$, where
\begin{eqnarray}\label{eq:sl2R}
\gamma''&=& \frac{\gamma'+\gamma e^{-2i\omega'}}{1+\gamma\bar{\gamma'}e^{-2i\omega'}},\label{eq:sl2R1}\\
\omega''&=& \omega+\omega'+\dfrac{1}{2i}\log
\frac{1+\gamma\bar{\gamma'}e^{-2i\omega'}}{1+\bar{\gamma}\gamma'e^{2i\omega'}}.
\label{eq:sl2R2}
\end{eqnarray}
These equations come from the matrix operation.
Here, the logarithm function is defined by its principal value and $\omega''$ is defined by mod $2\pi$.
The identity element is $(0,0)$, and
the correspondence between
$\begin{pmatrix}
\alpha & \beta \\
\bar{\beta} & \bar{\alpha}\\
\end{pmatrix}$ and $(\gamma,\omega)$ gives an isomorphism.

Now, the universal covering group $\widetilde{SL_2(\mathbb{R})}$ of $SU(1,1)$
can be described as
\[
\widetilde{SL_2(\mathbb{R})}=\{(\gamma,\omega)\mid |\gamma|<1, -\infty<\omega<\infty\}.
\]
Thus $\widetilde{SL_2(\mathbb{R})}$ is homeomorphic to $\Delta\times \mathbb{R}$. 
The group operation is given by (\ref{eq:sl2R1}) and (\ref{eq:sl2R2}) again, but
$\omega''$ is not mod $2\pi$ anymore.

Let $\chi: \widetilde{SL_2(\mathbb{R})}\to SL_2(\mathbb{R})$ be the covering projection.
Then $\ker\chi=\{(0,2m\pi)\mid m\in \mathbb{Z}\}$.

\begin{lemma}
The subset $(-1,1)\times \{0\}$ of $\widetilde{SL_2(\mathbb{R})}$ forms a subgroup.
\end{lemma}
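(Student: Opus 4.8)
The plan is to verify directly that the subset $S=(-1,1)\times\{0\}$ is closed under the group operation and under taking inverses, using the explicit formulas (\ref{eq:sl2R1}) and (\ref{eq:sl2R2}). The crucial observation is that the elements of $S$ have real coordinate $\gamma\in(-1,1)$ and angular coordinate $\omega=0$. First I would take two arbitrary elements $(\gamma,0)$ and $(\gamma',0)$ with $\gamma,\gamma'\in(-1,1)$ and compute their product. Since $\omega=\omega'=0$, the exponential factors $e^{-2i\omega'}=e^{2i\omega'}=1$, so the product formulas simplify dramatically: the first coordinate becomes
\[
\gamma''=\frac{\gamma'+\gamma}{1+\gamma\gamma'},
\]
where I have used that $\bar{\gamma'}=\gamma'$ because $\gamma'$ is real, and the angular coordinate becomes
\[
\omega''=\frac{1}{2i}\log\frac{1+\gamma\gamma'}{1+\gamma\gamma'}=\frac{1}{2i}\log 1=0.
\]

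The two things left to check are that $\gamma''$ is again real and lies in $(-1,1)$, and that $\omega''=0$. Reality is immediate since $\gamma,\gamma'$ are real and $1+\gamma\gamma'>0$ (because $|\gamma\gamma'|<1$), so the quotient is a well-defined real number. For the bound $|\gamma''|<1$, I would note that this is exactly the formula for the relativistic addition of velocities, or equivalently I can verify the algebraic identity
\[
1-\left(\frac{\gamma+\gamma'}{1+\gamma\gamma'}\right)^2=\frac{(1-\gamma^2)(1-\gamma'^2)}{(1+\gamma\gamma')^2}>0,
\]
which holds precisely because $1-\gamma^2>0$ and $1-\gamma'^2>0$. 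This shows $|\gamma''|<1$, so the product stays in $S$. The angular part $\omega''=0$ has already been computed above, using that the argument of the logarithm is $1$ and $\log 1=0$ under the principal branch.

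Finally I would confirm that $S$ contains the identity and is closed under inverses. The identity $(0,0)$ clearly lies in $S$. For the inverse, I would check that $(-\gamma,0)$ is the inverse of $(\gamma,0)$: applying the product formula to $(\gamma,0)$ and $(-\gamma,0)$ gives first coordinate $(\gamma-\gamma)/(1-\gamma^2)=0$ and angular coordinate $0$, recovering the identity. Since $-\gamma\in(-1,1)$ whenever $\gamma\in(-1,1)$, the inverse remains in $S$. I do not expect any serious obstacle here; the only point requiring a moment's care is the bound $|\gamma''|<1$, and the identity displayed above settles it at once. The computation is entirely routine once one observes that setting $\omega=\omega'=0$ makes all the transcendental factors collapse, reducing the operation to the simple addition law on the interval.
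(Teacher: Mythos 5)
Your proof is correct and follows the same route as the paper: direct verification from the product formulas (\ref{eq:sl2R1}) and (\ref{eq:sl2R2}) that the set is closed under multiplication and that $(-\gamma,0)$ is the inverse of $(\gamma,0)$. The paper simply asserts that closure is "straightforward to see," whereas you carry out the computation, including the identity $1-\gamma''^2=(1-\gamma^2)(1-\gamma'^2)/(1+\gamma\gamma')^2$ that guarantees $|\gamma''|<1$; this is a welcome amount of extra detail but not a different argument.
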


\begin{proof}
From (\ref{eq:sl2R1}) and (\ref{eq:sl2R2}),
it is straightforward to see that $(-1,1)\times \{0\}$ is closed under the group operation.
For $(\gamma,0)\in (-1,1)\times\{0\}$, its inverse is $(-\gamma,0)$.
\end{proof}

For the representation $\rho_s : G\to SL_2(\mathbb{R})$ defined by (\ref{eq:rho}),
\begin{equation}
\psi(\rho_s(x))=\frac{1}{2\sqrt{t}}
\begin{pmatrix}
t+1 & t-1 \\
t-1 & t+1
\end{pmatrix}\in SU(1,1).
\end{equation}
Thus $\psi(\rho_s(x))$ corresponds to $(\gamma_x,0)$, where $\gamma_x=(t-1)/(t+1)$.
Since $t>1$, $\gamma_x\in (-1,1)$.

%On the other hand,
%for $\psi(\rho_s(y))$, 
%the $(1,1)$ entry $\alpha_y$ and the $(1,2)$ entry $\beta_y$ are
%\begin{eqnarray*}
%\alpha_y&=&\frac{t+1}{2\sqrt{t}}+\frac{s(t^2-t+1)-(t-1)^2}{2(t-1)^2}i, \\
%\beta_y&=&\frac{(t-1)^2-2st}{2\sqrt{t}(t-1)}+\frac{(t-1)^2+s(t^2-3t+1)}{2(t-1)^2}i.
%\end{eqnarray*}
%We remark that $-\pi/2<\arg\,\alpha_y<\pi/2$.
%Then $\psi(\rho_s(y))$ corresponds to $(\gamma_y,\omega_y)$, where
%$\gamma_y=\beta_y/\alpha_y$ and $\omega_y=\arg\, \alpha_y$.

%Since $\rho_s(wx)=\rho_s(yw)$ in $SL_2(\mathbb{R})$,
%it is trivial that $\psi(\rho_s(wx))=\psi(\rho_s(yw))$.
Also, for the longitude $\mathcal{L}$,
\[
\psi(\rho_s(\mathcal{L}))=\frac{1}{2}
\begin{pmatrix}
B_s+\frac{1}{B_s} & B_s-\frac{1}{B_s} \\
B_s-\frac{1}{B_s} & B_s+\frac{1}{B_s}
\end{pmatrix}, B_s>0
\]
from Proposition \ref{prop:longi}.
Thus $\psi(\rho_s(\mathcal{L}))$ corresponds to $(\gamma_\mathcal{L},0)$, where
$\gamma_\mathcal{L}=(B_s^2-1)/(B_s^2+1)$.
Clearly, $\gamma_\mathcal{L}\in (-1,1)$.

%%%%%%%%%%%%%%%%%%%%%%%%%%%%%%%%%%%%%%%%%%%%%%%%%%%%
\section{Proof of Theorem \ref{thm:main}}

Since the knot exterior $M_n$ of $K_n$ satisfies $H^2(M_n;\mathbb{Z})=0$,
any $\rho_s: G\to SL_2(\mathbb{R})$ lifts to a representation
$\tilde{\rho}: G\to\widetilde{SL_2(\mathbb{R})}$ (\cite{G}).
Moreover, any two lifts $\tilde{\rho}$ and $\tilde{\rho}'$ are
related as follows:
\[
\tilde{\rho}'(g)=h(g)\tilde{\rho}(g),
\]
where $h:G\to \ker \chi\subset\widetilde{SL_2(\mathbb{R})}$.
Since $\ker \chi=\{(0,2m\pi)\mid m\in \mathbb{Z}\}$ is isomorphic to $\mathbb{Z}$,
the homomorphism $h$ factors through $H_1(M_n)$, so
it is determined only by the value $h(x)$ of a meridian $x$ (see \cite{Kh}).

The following result is the key in \cite{BGW}, which is originally claimed in \cite{Kh}, for the figure eight knot. 
Our proof most follows that of \cite{BGW}, but it is much simpler,
because the values of $\psi(\rho_s(x))$ and $\psi(\rho_s(\mathcal{L}))$
are calculated explicitly in Section \ref{sec:univ}.

\begin{lemma}\label{lem:key}
Let $\tilde{\rho}: G\to \widetilde{SL_2(\mathbb{R})}$ be a lift of $\rho_s$.
Then replacing $\tilde{\rho}$ by a representation
$\tilde{\rho}'=h\cdot \tilde{\rho}$ for some $h:G\to \widetilde{SL_2(\mathbb{R})}$,
we can suppose that $\tilde{\rho}(\pi_1(\partial M_n))$ is contained in the subgroup $(-1,1)\times \{0\}$ of $\widetilde{SL_2(\mathbb{R})}$.
\end{lemma}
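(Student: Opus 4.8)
The statement concerns $\pi_1(\partial M_n)$, the peripheral subgroup, which is generated by the meridian $x$ and the longitude $\mathcal{L}$. We already know from Section \ref{sec:univ} that $\psi(\rho_s(x))$ corresponds to $(\gamma_x,0)$ and $\psi(\rho_s(\mathcal{L}))$ corresponds to $(\gamma_{\mathcal{L}},0)$, both lying in the subgroup $(-1,1)\times\{0\}$ of $SU(1,1)\cong SL_2(\mathbb{R})$. The lift $\tilde\rho$ sends $x$ and $\mathcal{L}$ to elements of $\widetilde{SL_2(\mathbb{R})}$ that project to these under $\chi$; the only ambiguity is in the $\omega$-coordinate, which is pinned down modulo $2\pi$ by the covering. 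So each of $\tilde\rho(x)$ and $\tilde\rho(\mathcal{L})$ has the form $(\gamma_x,\,2\pi k_x)$ and $(\gamma_{\mathcal{L}},\,2\pi k_{\mathcal{L}})$ for some integers $k_x,k_{\mathcal{L}}$, since the fiber over $(\gamma,0)$ is exactly $\{(\gamma,2m\pi)\}$.

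**The correction homomorphism.** The plan is to exploit the freedom in choosing the lift, as recorded in the paragraph preceding the lemma: any two lifts differ by a homomorphism $h:G\to\ker\chi\cong\mathbb{Z}$, and $h$ factors through $H_1(M_n)\cong\mathbb{Z}$, so $h$ is completely determined by its value $h(x)=(0,2m\pi)$ on the meridian. First I would compute, using the group law (\ref{eq:sl2R2}) on the $\omega$-coordinate, how multiplication by $(0,2m\pi)$ shifts the $\omega$-coordinate of $\tilde\rho(x)$: since $\gamma=0$ for the central factor, the logarithmic correction term in (\ref{eq:sl2R2}) vanishes and the coordinates simply add, giving $\tilde\rho'(x)=(\gamma_x,\,2\pi k_x+2\pi m)$. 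Thus by choosing $m=-k_x$ we arrange $\tilde\rho'(x)=(\gamma_x,0)\in(-1,1)\times\{0\}$.

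**Pinning down the longitude.** The remaining task is to check that this same choice forces $\tilde\rho'(\mathcal{L})$ into $(-1,1)\times\{0\}$ as well — i.e. that its $\omega$-coordinate is automatically $0$, not merely a multiple of $2\pi$. The key structural fact is that $x$ and $\mathcal{L}$ commute in $G$, so $\tilde\rho'(x)$ and $\tilde\rho'(\mathcal{L})$ commute in $\widetilde{SL_2(\mathbb{R})}$. With $\tilde\rho'(x)=(\gamma_x,0)$ a real hyperbolic element sitting in the subgroup $(-1,1)\times\{0\}$, I would analyze its centralizer in $\widetilde{SL_2(\mathbb{R})}$. Since $\gamma_x\neq 0$ (as $t>1$), the element $(\gamma_x,0)$ is a nontrivial hyperbolic element whose centralizer in $SL_2(\mathbb{R})$ is a one-parameter subgroup, and the preimage of this centralizer under $\chi$ splits along the $\{0\}$-level: concretely, an element $(\gamma,\omega)$ commutes with $(\gamma_x,0)$ only if its $\omega$-coordinate is an integer multiple of $2\pi$ together with $\gamma$ lying on the axis through $\gamma_x$. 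The upshot I expect is that the full centralizer of $(\gamma_x,0)$ meeting the fiber structure forces $\tilde\rho'(\mathcal{L})=(\gamma_{\mathcal{L}},0)$ exactly.

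**The main obstacle.** The delicate point — and where I expect the real work to be — is the last step: showing the commuting relation forces the $\omega$-coordinate of $\tilde\rho'(\mathcal{L})$ to be exactly $0$ rather than some nonzero $2\pi k_{\mathcal{L}}$. This is not purely formal, because the centralizer of a hyperbolic element in $\widetilde{SL_2(\mathbb{R})}$, while connected and projecting onto the centralizer downstairs, still winds around the $\mathbb{R}$-direction; one must verify that the component of the centralizer containing both $\tilde\rho'(x)$ and $\tilde\rho'(\mathcal{L})$ lies in the level $\omega=0$. I would handle this by using the explicit group law: writing the commutativity $\tilde\rho'(x)\tilde\rho'(\mathcal{L})=\tilde\rho'(\mathcal{L})\tilde\rho'(x)$ in coordinates via (\ref{eq:sl2R1}) and (\ref{eq:sl2R2}), and comparing $\omega$-coordinates, the logarithmic terms must balance, which (because $\gamma_x$ is real and the relevant elements lie over the connected hyperbolic one-parameter subgroup through the identity) pins $\omega_{\mathcal{L}}=0$. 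Once both $\tilde\rho'(x)$ and $\tilde\rho'(\mathcal{L})$ lie in $(-1,1)\times\{0\}$, the whole peripheral subgroup does, since it is generated by these two elements and the subgroup is closed under the group operation by the preceding lemma.
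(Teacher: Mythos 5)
There is a genuine gap at the step you yourself flag as the delicate one: commutativity with $\tilde\rho'(x)$ cannot pin down the $\omega$-coordinate of $\tilde\rho'(\mathcal{L})$. The ambiguity in the lift of $\rho_s(\mathcal{L})$ is exactly multiplication by an element of $\ker\chi=\{(0,2m\pi)\}$, and these elements are \emph{central} in $\widetilde{SL_2(\mathbb{R})}$ (they are the deck transformations of the universal cover of a topological group). Concretely, $(\gamma_\mathcal{L},2k\pi)=(0,2k\pi)\cdot(\gamma_\mathcal{L},0)$ by the group law (\ref{eq:sl2R1})--(\ref{eq:sl2R2}), so $(\gamma_\mathcal{L},2k\pi)$ commutes with $(\gamma_x,0)$ for \emph{every} integer $k$ whenever $(\gamma_\mathcal{L},0)$ does. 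The centralizer of the hyperbolic element $(\gamma_x,0)$ upstairs is the full preimage of its centralizer downstairs, meeting every level $\omega=2k\pi$; no amount of balancing logarithmic terms in (\ref{eq:sl2R2}) will single out $k=0$. So your proposed mechanism for pinning down the longitude fails, and with it the proof.

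The paper closes this gap with a completely different input: since $K_n$ has genus one, the longitude $\mathcal{L}$ is a single commutator in $G$, hence $\tilde\rho(\mathcal{L})$ is a single commutator in $\widetilde{SL_2(\mathbb{R})}$ (note that the value of a commutator is independent of the choice of lift, since the correction terms are central and cancel). Wood's inequality (inequality (5.5) of \cite{W}, a form of the Milnor--Wood bound) then gives $-3\pi/2<2j\pi<3\pi/2$ for the $\omega$-coordinate $2j\pi$ of $\tilde\rho(\mathcal{L})$, forcing $j=0$ before any correction $h$ is applied. Your handling of the meridian via $h(x)=(0,-2\pi k_x)$ is fine (and, as you should note explicitly, this correction does not disturb $\tilde\rho(\mathcal{L})$ because $h$ factors through $H_1(M_n)$ and $\mathcal{L}$ is null-homologous), and your final step using that $(-1,1)\times\{0\}$ is a subgroup is correct; but the longitude step needs the commutator-plus-Wood argument, not a centralizer computation.
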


\begin{proof}
Since $\chi(\tilde{\rho}(\mathcal{L}))=(\gamma_\mathcal{L},0)$,
$\tilde{\rho}(\mathcal{L})=(\gamma_\mathcal{L},2j\pi)$ for some $j$.
On the other hand, $\mathcal{L}$ is a commutator,
because our knot is genus one.
Therefore the inequality (5.5) of \cite{W} implies  $-3\pi/2<2j\pi<3\pi/2$.
Thus we have $\tilde{\rho}(\mathcal{L})=(\gamma_\mathcal{L},0)$.

Similarly, $\tilde{\rho}(x)=(\gamma_x,2l \pi)$ for some $l$.
Let us choose $h:G\to \widetilde{SL_2(\mathbb{R})}$
so that $h(x)=(0,-2l\pi)$.
Set $\tilde{\rho}'=h\cdot \tilde{\rho}$.
Then a direct calculation shows that $\tilde{\rho}'(x)=(\gamma_x,0)$ and $\tilde{\rho}'(\mathcal{L})=(\gamma_\mathcal{L},0)$.
Since $x$ and $\mathcal{L}$ generate the peripheral subgroup $\pi_1(\partial M_n)$,
the conclusion follows from these.
\end{proof}

\begin{proof}[Proof of Theorem \ref{thm:main}]
For $r=0$, $M_n(0)$ is irreducible and has positive betti number.
Hence $\pi_1(M_n(0))$ is left-orderable by \cite[Corollary 3.4]{BRW}.
For $r=4$, \cite{CLW} and \cite{T} confirmed the conclusion.

Let $r=p/q\in (0,4)$.
By Proposition \ref{prop:g-image}, we can fix $s$ so that $g(s)=r$.
Choose a lift $\tilde{\rho}$ of $\rho_s$ so that
$\tilde{\rho}(\pi_1(\partial M_n))\subset (-1,1)\times\{0\}$.
Then $\rho_s(x^p\mathcal{L}^q)=I$, so $\chi(\tilde{\rho}(x^p\mathcal{L}^q))=I$.
This means that $\tilde{\rho}(x^p\mathcal{L}^q)$ lies in $\ker\chi=\{(0,2m\pi)\mid m\in \mathbb{Z}\}$.
Hence $\tilde{\rho}(x^p\mathcal{L}^q)=(0,0)$.
Then $\tilde{\rho}$ can induce a homomorphism $\pi_1(M_n(r))\to \widetilde{SL_2(\mathbb{R})}$
with non-abelian image.
Recall that $\widetilde{SL_2(\mathbb{R})}$ is left-orderable (\cite{Be}).
Hence any (non-trivial) subgroup of $\widetilde{SL_2(\mathbb{R})}$ is left-orderable.
Since $M_n(r)$ is irreducible (\cite{HT}), 
$\pi_1(M_n(r))$ is left-orderable by \cite[Theorem 1.1]{BRW}.
This completes the proof.
\end{proof}

%%%%%%%%%%%%%%%%%%%%%%%%%%%%%%%%%%%%%%%%%%%%%%%%%%%%%%%%%%%%%%%%%%%%%%
\bibliographystyle{amsplain}

\end{document}